\numberwithin{equation}{section}
\theoremstyle{plain}
\newtheorem{thm}{Theorem}[section]
\newtheorem{prop}{Proposition}[section]
\newtheorem{rem}{Remark}[section]
\newtheorem{cor}{Corollary}[section]
\begin{document}

\begin{frontmatter}
\title{Non-Conventional Limits of Random Sequences Related to %Restricted 
Partitions of Integers}
\runtitle{Non-Conventional Limits of Random Sequences}
%\thankstext{T1}{Footnote to the title with the ``thankstext'' command.}

\begin{aug}
\author{\fnms{Jordan} \snm{Stoyanov}\thanksref{m1}
\ead[label=e1]{stoyanovj@gmail.com}}
\and
\author{\fnms{Christophe} \snm{Vignat}\thanksref{t2,t3}
\ead[label=e2]{cvignat@tulane.edu}}

\thankstext{t2}{Corresponding Author}

\runauthor{J. Stoyanov and Ch. Vignat}

\affiliation{Institute of Mathematics and Informatics, Bulgarian Academy of Sciences Sofia, Bulgaria. Formerly at Newcastle 
University, UK \thanksmark{m1}}

\affiliation{L.S.S., CentraleSupelec, Universit\'{e} Paris-Sud XI, Orsay  \thanksmark{t2}}
\affiliation{Department of Mathematics, Tulane University \thanksmark{t3}}

\address{Institute of Mathematics and Informatics, Bulgarian Academy of Sciences,\\ Str. Acad. G. Bonchev Block 8, 1113 Sofia, Bulgaria\\
L2S-CentraleSupelec
3, Rue Joliot-Curie
91192 Gif sur Yvette, France\\
Tulane University, 6823 St. Charles Avenue, 
New Orleans, LA 70118, USA\\
\printead{e1}\\
%\phantom{E-mail:\ }
\printead{e2}}

%\address{Address of the Third author\\
%Usually a few lines long\\
%Usually a few lines long\\
%\printead{e3}\\
%\printead{u1}}
\end{aug}

\begin{abstract}
We deal with a sequence of integer-valued random variables $\{Z_N\}_{N=1}^{\infty}$ which is 
related to restricted partitions of positive integers. We observe that $Z_N=X_1+ \ldots + X_N$ 
for independent and bounded random variables $X_j$'s, so $Z_N$ has finite mean ${\bf E}Z_N$ and variance 
${\bf Var}Z_N$. We want to find the limit distribution of 
${\hat Z}_N=\left(Z_N-{\bf E}Z_N\right)/{\sqrt{{\bf Var}Z_N}}$ as $N \to \infty.$ 
While in many cases the limit distribution is normal,  
the main results established in this paper are  
that ${\hat Z}_N  \overset{d}{\to}  Z_{*},$ where 
$Z_{*}$ is a bounded random variable. We find explicitly the range of values of $Z_*$ and derive some   
properties of its distribution.  
The main tools used are moment generating functions, cumulant generating functions, moments and cumulants
of the random variables involved. Useful related topics are also discussed. 
\end{abstract}

\begin{keyword}[class=MSC]
\kwd[Primary ]{60F05}
\kwd{05A17}
\kwd[; secondary ]{60E10}
\kwd{05A15}
\end{keyword}

\begin{keyword}
\kwd{partition of integers}
\kwd{random variables}
\kwd{moment generating function}
\kwd{cumulant generating function}
\kwd{moments}
\kwd{cumulants}
\kwd{limit theorem}
\end{keyword}

\end{frontmatter}

\section{Introduction}
For any positive integer number $N$ we consider the following three objects: 

{\it Sample space} \ $\Omega_N = \{\omega\}$, \ $\omega=(k_1,k_2,\ldots,k_N),$ \ $0\leq k_j \leq 2^j - 1, j=1,\ldots, N.$ 
Think of $\omega$ as the outcome of an experiment, thus the number of all outcomes is $|\Omega_N| = 2^{N(N+1)/2}.$ To each 
$\omega$ we assign the same probability: \ ${\bf P}(\omega)=1/2^{N(N+1)/2}$ (uniform discrete distribution).  
 
{\it Sequence of integer numbers} \  $\{\alpha_{k}^{\left(N\right)}, \ k=0,1,\ldots,2^{N+1}-N-2\},$  defined 
by the equality 
\begin{equation}
\sum_{k=0}^{2^{N+1}-N-2}\alpha_{k}^{\left(N\right)}x^{k}=\prod_{i=0}^{N-1}\left(1+x^{2^{i}}\right)^{N-i}.\label{eq:gf}
\end{equation}

{\it Sequence of random variables} \ $\{Z_N, N=1,2,\ldots \},$ where 
\[
Z_N(\omega) = k_1+k_2+ \ldots + k_N \ \mbox{ for } \ \omega =(k_1,k_2,\ldots,k_N) \in \Omega_N. 
\]  

Now we are ready to involve together the above three objects when answering the question  
\[
{\bf P}[Z_N(\omega)=k] = ? 
\]
The range of values of $Z_N$ is the set $\{0, 1, \ldots, k, \ldots, \ldots, 2^{N+1}-N-2\}.$ The minimal value, $0$, is 
attained if and only if $k_1=k_2= \ldots=k_N = 0$, while the maximal value, $2^{N+1}-N-2,$ is possible if and only if  \ 
$k_j=2^j - 1, \ j=1,2,\ldots, N.$ In general we have the following relation: 
\begin{equation}
{\bf P}[Z_{N}=k] =\frac{\alpha^{(N)}_k}{2^{N(N+1)/2}}, \ k=0,1,\ldots,2^{N+1}-N-2.\label{eq:Zn}
\end{equation}

It is worth to tell that the 
 sequence \ $\{ \alpha_{k}^{\left(N\right)}, \ k=0,1, \ldots, 2^{N+1} - N - 2\}$ 
appears as entry A131823 in the Online Encyclopedia of Integer Sequences (OEIS) 
along with its generating function \eqref{eq:gf}. 

Notice that relation \eqref{eq:gf}, for $x=1,$ yields
\[
\sum_{k=0}^{2^{N+1}-N-2}\alpha_{k}^{\left(N\right)}=2^{N(N+1)/2},
\]
as it should be in order \eqref{eq:Zn} to define a proper probability distribution.

Another property of the sequence
$\left\{ \alpha_{k}^{\left(N\right)}\right\},$% _{k=0}^{2^{N+1}-N-2}$, 
\ not mentioned in OEIS, is its 
 combinatorial interpretation as a restricted partition function, namely:  
\[
\alpha_{k}^{\left(N\right)}=|\left\{ \left(k_{1},\dots,k_{N}\right):0\le k_{j} \le 2^j - 1, \ j=1, \ldots, N,  \ k_{1}+\cdots+k_{N}=k\right\}|,
\]
where \ $k=0, 1,2,\ldots, 2^{N+1}-N-2.$ To be clear, at the `end points' $k=0$ and $k=2^{N+1} -N -2$ we  have 
$\alpha_{k}^{\left(N\right)}=1.$ 
Moreover, the coefficients $\alpha_{k}^{\left(N\right)}$ satisfy the symmetry property $\alpha_{k}^{\left(N\right)}=\alpha_{k'}^{\left(N\right)}$ for any pair $\left(k,k'\right)$ of indices  such that $k+k'=2^{N+1}-N-2.$

For reader's convenience,  more details and properties of the sequence $\left\{ \alpha_{k}^{\left(N\right)}\right\}$ % _{k=0}^{2^{N+1}-N-2}$ 
are summarized and given in Appendix at the end of the paper.

Now we focus our attention on the analysis of the random sequence $\{Z_N, \ N=1,2, \ldots \}.$ 
We will show further that the mean value and the variance of $Z_{N}$ are 
\[
\mu_{N}={\bf E}Z_{N}=2^{N}-\frac{N}{2}-1 \ \mbox{ and } \  
\sigma_{N}^{2}={\bf Var}Z_{N}=\frac{1}{9}\left(2^{2N}-\frac{3}{4}N-1\right), 
\]
and our specific goal is to study the standardized  
version of the random variable $Z_{N}:$ 
\[
\hat{Z}_{N}=\frac{Z_{N}-\mu_{N}}{\sigma_{N}}.
\]

A detailed analysis of $Z_{N}$ and $\hat{Z}_{N}$
is given in Sections 2 and 3. While usually the moments are the characteristics easier to 
calculate and use, here perhaps as a  surprise, it is easier to calculate explicitly the cumulants of  $\hat{Z}_{N}$, see 
 (\ref{eq:cumulants}). In Section 4 we prove that these cumulants have limits, and use an important  limit theorem, see Janson (1988), 
to conclude in Section 5 that there is a random variable,  $Z_*$, such that $ \hat{Z}_{N}  \stackrel{d}{\to}  Z_*$ as $N \to \infty.$ 
The limiting cumulants show that the limit distribution of  $Z_*$, say $\Lambda$, is not Gaussian. 
In fact, $\Lambda$ has a bounded support, the interval $[-3,3]$. This is the case $a=2$ of a more general 
model, Section 5, of a random sequence $\{Z_N(a)\}$ defined  for positive integer $a \geq 2.$ We find the exact bounded support for the 
limit distribution $\Lambda^{(a)}$ of the standardized sequence $\{\hat{Z}_N^{(a)}\}$ as $N \to \infty.$

The reader can consult the book by Su (2016) for other stochastic models and  
random sequences related to partitions of integers when the normal distribution \ ${\mathcal N}(0,1)$ appears as a 
limit distribution of quantities like our ${\hat Z}_{N}.$  Understandably, different models lead to different limit distributions, 
see, e.g., Morrison (1998) and Hwang and Zacharovas (2015).

\section{Properties of the random variables $Z_{N}$}%sec2

The random variable $Z_{N}$ as defined by \eqref{eq:Zn} can be characterized
as follows: Denote by $X_{i}$ a discrete random variable uniformly
distributed over the set $\left\{ 0,1,\ldots,2^{i}-1\right\}:$  
\[
{\bf P}[X_{i}=k] = \frac{1}{2^i}, \ k=0,1,\dots,2^{i}-1. 
\]

Consider $N$ independent random variables $X_{1},\dots,X_{N}$ and define
\[
\tilde{Z}_{N}=X_{1}+\cdots+X_{N}.\label{eq:Z_N sum X_i}
\]
Then we easily find that 
\[
{\bf P}[\tilde{Z}_{N}=k] = \frac{\alpha^{(N)}_k}{2^{N(N+1)/2}}, \ k=0,1,\dots,2^{N+1}-N-2.
\]
Hence, $Z_{N}$ and $\tilde{Z}_{N}$  take the same values with the same probabilities, i.e. 
\begin{equation}
\tilde{Z}_{N} \ \stackrel{d}{=} \ Z_{N}  \quad \Rightarrow \quad  Z_{N} \ \overset{d}{=} \ X_{1}+ \dots + X_{N}. \label{eq:2.1}
\end{equation}

This is a {\it stochastic representation} of $Z_{N}.$  
E.g., if $N=2,$ we have ${\bf P}[X_{1}=0] ={\bf P}[X_{1}=1] =\frac{1}{2},$
${\bf P}[X_{2}=0] ={\bf P}[X_{2}=1] ={\bf P}[X_{2}=2] ={\bf P}[X_{2}=3] =\frac{1}{4},$
 thus $Z_{2}=X_{1}+X_{2}$ takes values in the set $\{ 0,1,2,3,4\} $
with respective probabilities $\{ \frac{1}{8},\frac{1}{4},\frac{1}{4},\frac{1}{4},\frac{1}{8}\}.$ 

\begin{prop}%prop1
The moment generating function of $Z_{N}$ is 
\begin{align}
{\bf E}\text{e}^{tZ_{N}} & =\prod_{i=0}^{N-1}\left(\frac{1}{2}+\frac{1}{2}e^{t2^{i}}\right)^{N-i}.\label{eq:mgf1}
\end{align}
It has also another representation:  
\begin{equation}
{\bf E}\text{e}^{tZ_{N}}=\prod_{k=1}^{N}\frac{1}{2^{k}}\frac{1-e^{t2^{k}}}{1-e^{t}}.\label{eq:mgf2}
\end{equation}
\end{prop}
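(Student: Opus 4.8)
The plan is to prove both representations \eqref{eq:mgf1} and \eqref{eq:mgf2} starting from the stochastic representation \eqref{eq:2.1}. First I would use independence of the $X_i$'s to write ${\bf E}\,\mathrm{e}^{tZ_N}=\prod_{i=1}^{N}{\bf E}\,\mathrm{e}^{tX_i}$, so the whole problem reduces to computing the moment generating function of a single uniform variable $X_i$ on $\{0,1,\dots,2^i-1\}$. Then ${\bf E}\,\mathrm{e}^{tX_i}=2^{-i}\sum_{k=0}^{2^i-1}\mathrm{e}^{tk}=2^{-i}\,\frac{1-\mathrm{e}^{t2^i}}{1-\mathrm{e}^{t}}$ by the finite geometric sum, which after relabeling $i\mapsto k$ gives \eqref{eq:mgf2} directly.

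For \eqref{eq:mgf1} I would recognize the classical binary-digit factorization: a uniform variable on $\{0,1,\dots,2^i-1\}$ can be written as $\sum_{j=0}^{i-1}2^{j}B_{ij}$ with i.i.d.\ fair Bernoulli bits $B_{ij}$, hence ${\bf E}\,\mathrm{e}^{tX_i}=\prod_{j=0}^{i-1}\bigl(\tfrac12+\tfrac12\mathrm{e}^{t2^{j}}\bigr)$. Substituting this into the product over $i$ and reorganizing the double product — collecting, for each fixed power $2^{j}$, the number of pairs $(i,j)$ with $j<i\le N$, which is $N-j$ — yields $\prod_{j=0}^{N-1}\bigl(\tfrac12+\tfrac12\mathrm{e}^{t2^{j}}\bigr)^{N-j}$, which is \eqref{eq:mgf1} after renaming $j$ to $i$. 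Alternatively, and perhaps more cleanly, \eqref{eq:mgf1} follows immediately by substituting $x=\mathrm{e}^{t}$ into the defining generating-function identity \eqref{eq:gf} and dividing by $2^{N(N+1)/2}$, using \eqref{eq:Zn}; I would present this as the quick route and keep the Bernoulli decomposition as a remark.

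The equivalence of the two forms can then be checked either by the digit argument above or, telescoping-style, by noting the algebraic identity $1+x^{2^{i}}=\frac{1-x^{2^{i+1}}}{1-x^{2^{i}}}$ and collapsing the product $\prod_{i=0}^{N-1}(1+x^{2^i})^{N-i}$ accordingly; the exponents $N-i$ make this a slightly more delicate telescoping, so I would instead just verify both equal the single-variable product $\prod_i {\bf E}\,\mathrm{e}^{tX_i}$. The only genuine point to be careful about is the removable singularity at $t=0$: the ratio $(1-\mathrm{e}^{t2^k})/(1-\mathrm{e}^{t})$ in \eqref{eq:mgf2} has a limit $2^k$ as $t\to0$, consistent with ${\bf E}\,\mathrm{e}^{0}=1$, so \eqref{eq:mgf2} should be read as the analytic continuation (equivalently, defined by that limit at $t=0$). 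This is the one place where a careless reader might see a $0/0$; everything else is elementary. Since $Z_N$ is bounded, the mgf is entire in $t$, so no convergence issues arise. I expect no real obstacle here — the "hard part" is merely presenting the two identities in a way that makes their equality transparent rather than a computation the reader must redo.
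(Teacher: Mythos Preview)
Your proposal is correct and essentially follows the paper's own proof: both derive \eqref{eq:mgf2} from independence and the finite geometric sum, then obtain \eqref{eq:mgf1} by factoring each $\mathbf{E}\mathrm{e}^{tX_i}$ as $\prod_{j=0}^{i-1}\bigl(\tfrac12+\tfrac12\mathrm{e}^{t2^{j}}\bigr)$ and swapping the order of the double product to collect the exponent $N-j$. The only cosmetic difference is that the paper states the factorization as the algebraic identity $\dfrac{1-\mathrm{e}^{t2^{j}}}{1-\mathrm{e}^{t}}=\prod_{i=0}^{j-1}\bigl(1+\mathrm{e}^{t2^{i}}\bigr)$, whereas you phrase it probabilistically via the binary-digit Bernoulli decomposition of $X_i$; your alternative shortcut of substituting $x=\mathrm{e}^{t}$ into \eqref{eq:gf} and using \eqref{eq:Zn} is a valid and quicker route that the paper does not take, and your remark on the removable singularity at $t=0$ is a useful addition.
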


\begin{proof}
We use the moment generating function of the random variable $X_{i}$ and the independence of $X_1, \ldots, X_N$ to conclude that  
\[
{\bf E}\hbox{e}^{tX_{i}}=\frac{1}{2^{i}}\sum_{j=1}^{2^{i}-1}\hbox{e}^{tj}=\frac{1}{2^{i}}\frac{1-\hbox{e}^{t2^{i}}}{1-\hbox{e}^{t}}  \ 
\Rightarrow \ 
{\bf E}\hbox{e}^{tZ_{N}}=\prod_{i=1}^{N}\frac{1}{2^{i}}\frac{1-\hbox{e}^{t2^{i}}}{1-\hbox{e}^{t}}. 
\]
From the identity 
\[
\frac{1-\hbox{e}^{t2^{j}}}{1-\hbox{e}^{t}}=\prod_{i=0}^{j-1}\left(1+\hbox{e}^{t2^{i}}\right),
\]
we deduce that 
\begin{align*}
\prod_{j=1}^{N}\frac{1-\hbox{e}^{t2^{j}}}{1-\hbox{e}^{t}} =\prod_{j=1}^{N}\prod_{i=0}^{j-1}\left(1+\hbox{e}^{t2^{i}}\right)=\prod_{i=0}^{N-1}\prod_{j=i+1}^{N}\left(1+\hbox{e}^{t2^{i}}\right) 
 =\prod_{i=0}^{N-1}\left(1+\hbox{e}^{t2^{i}}\right)^{N-i}.
\end{align*}
Thus both (2.2) and (2.3) are established. 
\end{proof}
Based on the moment generating function \eqref{eq:mgf2}, we  derive
the following result. 

\begin{prop}%\prop2
Besides \eqref{eq:2.1}, the random variable $Z_{N}$ admits
another stochastic representation: 
\begin{align}
Z_{N} & \overset{d}{=} V_{0}^{\left(0\right)}+\left(V_{0}^{\left(1\right)}+2V_{1}^{\left(1\right)}\right)+\left(V_{0}^{\left(2\right)}+2V_{1}^{\left(2\right)}+4V_{2}^{\left(2\right)}\right)+\cdots\label{eq:ZNV0V02V1}\\
 & +\left(V_{0}^{\left(N-1\right)}+2V_{1}^{\left(N-1\right)}+\cdots+2^{N-1}V_{N-1}^{\left(N-1\right)}\right).\nonumber 
\end{align}
Here all $V_{i}^{\left(j\right)}$ with  \ $0 \leq i \leq j \leq N-1,$ 
are independent Bernoulli random variables:  
\[
{\bf P}[V_{i}^{(j)}=0] ={\bf P}[V_{i}^{(j)}=1]=\frac{1}{2}.
\]
\end{prop}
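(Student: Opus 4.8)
The plan is to compute the moment generating function of the right-hand side of \eqref{eq:ZNV0V02V1} and match it with \eqref{eq:mgf1}. Write $W_{N}$ for that sum, i.e. $W_{N}=\sum_{j=0}^{N-1}\sum_{i=0}^{j}2^{i}V_{i}^{(j)}$, where the family $\bigl\{V_{i}^{(j)}:0\le i\le j\le N-1\bigr\}$ consists of independent $\mathrm{Bernoulli}(1/2)$ variables. Both $W_{N}$ and $Z_{N}$ take only finitely many values, so their moment generating functions are finite for every real $t$ and each determines the corresponding law uniquely; hence it suffices to prove ${\bf E}\,e^{tW_{N}}={\bf E}\,e^{tZ_{N}}$ for all $t$.

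First I would use ${\bf E}\,e^{t2^{i}V_{i}^{(j)}}=\tfrac12+\tfrac12 e^{t2^{i}}$ together with the independence of the whole family to factorize
\[
{\bf E}\,e^{tW_{N}}=\prod_{j=0}^{N-1}\prod_{i=0}^{j}\Bigl(\tfrac12+\tfrac12 e^{t2^{i}}\Bigr).
\]
The next step is to swap the order of the double product: for each fixed $i\in\{0,\dots,N-1\}$ the index $j$ ranges over $\{i,i+1,\dots,N-1\}$, that is over exactly $N-i$ values, so the factor $\tfrac12+\tfrac12 e^{t2^{i}}$ occurs with multiplicity $N-i$. This gives
\[
{\bf E}\,e^{tW_{N}}=\prod_{i=0}^{N-1}\Bigl(\tfrac12+\tfrac12 e^{t2^{i}}\Bigr)^{N-i},
\]
which is precisely \eqref{eq:mgf1}. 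Hence $W_{N}\overset{d}{=}Z_{N}$ and the proposition follows.

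I would also record the variant that explains the grouping chosen in \eqref{eq:ZNV0V02V1}: the $j$-th block $\sum_{i=0}^{j}2^{i}V_{i}^{(j)}$ has moment generating function $\prod_{i=0}^{j}\tfrac12\bigl(1+e^{t2^{i}}\bigr)=2^{-(j+1)}\bigl(1-e^{t2^{j+1}}\bigr)/\bigl(1-e^{t}\bigr)$, using the telescoping identity $\prod_{i=0}^{j-1}(1+e^{t2^{i}})=(1-e^{t2^{j}})/(1-e^{t})$ from the proof of the preceding proposition. This is exactly the moment generating function of a uniform variable on $\{0,1,\dots,2^{j+1}-1\}$, whose binary digits are $V_{0}^{(j)},\dots,V_{j}^{(j)}$; multiplying over $j=0,\dots,N-1$ and reindexing $k=j+1$ reproduces \eqref{eq:mgf2}, so \eqref{eq:ZNV0V02V1} is just \eqref{eq:2.1} with each $X_{k}$ expanded in base $2$. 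There is no real obstacle in this argument; the only point that deserves a word of care is the appeal to uniqueness of the moment generating function, which is immediate here since all the variables involved are bounded.
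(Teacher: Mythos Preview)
Your proof is correct and follows the same line as the paper's, which simply states that the representation follows from the definition of $Z_{N}$ together with the explicit moment generating function; you have merely written out the details of that MGF computation and the reindexing of the double product. Your additional remark that each block $\sum_{i=0}^{j}2^{i}V_{i}^{(j)}$ is uniform on $\{0,\dots,2^{j+1}-1\}$ is exactly the content of the paper's Remark following this proposition, so nothing in your argument departs from the paper's approach.
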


\begin{proof}
The statement follows from the definition of the random variable $Z_{N}$ and the explicit expression of its 
moment generating function (\ref{eq:mgf2}). 
\end{proof}

\begin{rem}%rem3
The arguments used in the proof of Proposition 2 allow to state that a random variable $X_n,$ which is uniformly distributed over 
the set $\{0, 1, \ldots, 2^n - 1\},$ is a linear combination of $n$ independent 0--1 Bernoulli random variables 
with coefficients $2^0, 2^1, \ldots, 2^i, \ldots, 2^{n-1},$ respectively.   
\end{rem}

% fixed eq:Z_N sum X_i
The stochastic representations (2.1) %\eqref{eq:Z_N sum X_i} 
and \eqref{eq:ZNV0V02V1} allow to compute easily the first moments of $Z_{N}$. 

\begin{prop}%prop4
The mean value and the variance of the random variable $Z_{N}$ are  
\[
\mu_{N}={\bf E}Z_{N}=2^{N}-\frac{N}{2}-1 \ \mbox{ \ and \ } \ 
\sigma_{N}^{2}={\bf Var}Z_{N}=\frac{1}{9}\left(2^{2N}-\frac{3}{4}N-1\right).
\]
\end{prop}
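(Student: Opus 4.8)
The plan is to use the stochastic representation (2.1), namely $Z_N \overset{d}{=} X_1 + \dots + X_N$ with the $X_i$ independent and $X_i$ uniform on $\{0,1,\dots,2^i-1\}$, and then exploit linearity of expectation and additivity of variance for independent summands. So it suffices to compute ${\bf E}X_i$ and ${\bf Var}X_i$ for each $i$, and then sum.

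First I would record the elementary moments of a uniform variable on $\{0,1,\dots,m-1\}$ with $m=2^i$: ${\bf E}X_i = (m-1)/2 = (2^i-1)/2$ and ${\bf Var}X_i = (m^2-1)/12 = (2^{2i}-1)/12$. (Alternatively, one can use the Bernoulli decomposition from Remark 2.1: $X_i \overset{d}{=} \sum_{r=0}^{i-1} 2^r B_r$ with i.i.d.\ fair Bernoulli $B_r$, giving ${\bf E}X_i = \tfrac12\sum_{r=0}^{i-1}2^r = \tfrac12(2^i-1)$ and ${\bf Var}X_i = \tfrac14\sum_{r=0}^{i-1}4^r = \tfrac{1}{12}(4^i-1)$, which is a nice cross-check.) Then
\[
\mu_N = \sum_{i=1}^N {\bf E}X_i = \frac12\sum_{i=1}^N (2^i-1) = \frac12\left(2^{N+1}-2-N\right) = 2^N - \frac{N}{2} - 1,
\]
using the geometric sum $\sum_{i=1}^N 2^i = 2^{N+1}-2$.

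Next, by independence, $\sigma_N^2 = \sum_{i=1}^N {\bf Var}X_i = \frac{1}{12}\sum_{i=1}^N (4^i - 1) = \frac{1}{12}\left(\frac{4^{N+1}-4}{3} - N\right)$, using $\sum_{i=1}^N 4^i = \frac{4(4^N-1)}{3}$. Simplifying, $\frac{1}{12}\cdot\frac{4(4^N-1)}{3} = \frac{4^N-1}{9} = \frac{2^{2N}-1}{9}$, so
\[
\sigma_N^2 = \frac{2^{2N}-1}{9} - \frac{N}{12} = \frac{1}{9}\left(2^{2N} - 1 - \frac{9N}{12}\right) = \frac{1}{9}\left(2^{2N} - \frac{3}{4}N - 1\right),
\]
which is the claimed formula.

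This argument is entirely routine: the only things to be careful about are the geometric-series bookkeeping (indices starting at $1$, not $0$) and the arithmetic of combining the $1/12$ and $1/9$ factors, so there is no real obstacle — the "hard part," such as it is, is just making sure the constant $-\tfrac34 N$ comes out correctly from $-N/12$ after the global factor $\tfrac19$ is pulled out. One could also verify both formulas against the $N=2$ example given before Proposition 1, where $Z_2$ takes values $\{0,1,2,3,4\}$ with probabilities $\{\tfrac18,\tfrac14,\tfrac14,\tfrac14,\tfrac18\}$: this gives ${\bf E}Z_2 = 2 = 2^2 - 1 - 1$ and ${\bf Var}Z_2 = \tfrac{1}{9}(16 - \tfrac32 - 1) = \tfrac{13.5}{9} = \tfrac{3}{2}$, matching a direct computation.
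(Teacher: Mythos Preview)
Your proof is correct and follows essentially the same approach as the paper: use the representation $Z_N \overset{d}{=} X_1+\cdots+X_N$, compute ${\bf E}X_i=(2^i-1)/2$ and ${\bf Var}X_i=(2^{2i}-1)/12$, and sum the geometric series. Your additional cross-check via the Bernoulli decomposition and the $N=2$ sanity check are nice extras, but the core argument is identical to the paper's.
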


\begin{proof}
First, from (2.1) %\eqref{eq:Z_N sum V_i} 
we have 
\[
{\bf E}X_{k}=\frac{2^{k}-1}{2} \ \Rightarrow \ {\bf E}Z_{N}=\sum_{k=1}^{N}\frac{2^{k}-1}{2}=2^{N}-\frac{N}{2}-1.
\]
Second, in view of the independence of the random variables involved,
the variance of $Z_{N}$ is  
\[
\sigma_{N}^{2}=\sum_{k=1}^{N}{\bf Var}X_k =\sum_{k=1}^{N}\frac{2^{2k}-1}{12}=\frac{1}{9}\left(2^{2N}-\frac{3}{4}N-1\right).
\]
\end{proof}

\section{Properties of the random variable ${\hat Z}_N$}%sec3

In Proposition 2.3, we have found $\mu_n$ and $\sigma_N^2$, so the standardized random variable
\begin{equation}
\hat{Z}_{N}=\frac{Z_{N}-\mu_{N}}{\sigma_{N}}\label{eq:standard}
\end{equation} 
is explicitly defined, ${\bf E}{\hat Z}_N=0, {\bf Var}{\hat Z}_N =1,$ and we are looking for its limit as $N \to \infty.$ 
 Since for any $N$, \ $Z_N$ is the partial sum of the sequence of independent random variables 
$\{X_j\}$ having increasing but finite means and variances,  we may suggest that the 
asymptotic distribution of  $\hat{Z}_{N}$ is normal. This is equivalent to saying that the sequence of random variables 
$\{X_j\}$ satisfies the central limit theorem (CLT). 
It is well-known that all depends 
on the behaviour of the variance of $X_N$, the last term,  and its `contribution' to the variance of $Z_N.$ 
By using the specific structure of the random variables $X_j, j=1,\ldots, N$, we can easily show, e.g., 
that the classical Lindeberg's condition (see Lo\`eve (1977), Petrov (1995) or Shiryaev (2016)) 
is not satisfied. 
No conclusion, however, because this condition is only sufficient for the validity of the CLT. 
Interestingly, there are other conditions involved when studying the CLT, and they also fail to hold. These are   
 U.A.N. (uniform asymptotic negligibility) condition, and Feller's condition. Regarding all these conditions, Lindeberg's, U.A.N., 
Feller's, useful illustrations are given in Stoyanov (2013), Section 17. 

Our goal here is to find and/or characterize the limit distribution of $\hat{Z}_{N}$
as $N\to\infty$. For this we need to establish first the following result.

\begin{prop}%prop5
The moment generating function of $\hat{Z}_{N}$, see \eqref{eq:standard}, is 
\begin{equation}
\psi_N(t)={\bf E}\hbox{e}^{t\hat{Z}_{N}}\hspace{-0.1cm}=\hspace{-0.1cm}\prod_{k=1}^{N}\left(\frac{1}{2^{k}}\,\frac{\sinh\left(\frac{t}{2\sigma_{N}}\,2^{k}\right)}{\sinh\left(\frac{t}{2\sigma_{N}}\right)}\right)=\hspace{-0.2cm}\prod_{1\le l\le k\le N}\hspace{-0.2cm}\cosh\left(2^{k-l}\,\frac{t}{2\sigma_{N}}\right), \ t \in {\mathbb R}.
\label{eq:cf}
\end{equation}
%\[
%=\prod_{1\le l\le k\le N}\cosh\left(2^{k-l}\,\frac{t}{2\sigma_{N}}\right), \ t \in {\mathbb R}.
%\]
Its cumulant generating function is 
\[
\log \psi_N(t)=\sum_{k=1}^{N}\log\left(\frac{1}{2^{k}}\,\frac{\sinh\left(\frac{t}{2\sigma_{N}}\,2^{k}\right)}{\sinh\left(\frac{t}{2\sigma_{N}}\right)}\right).
\]
Moreover, denoting by $\kappa_j^{(N)}=\kappa_j({\hat Z}_N)$ the $j$th order cumulant of $\hat{Z}_{N}, \ j=1,2, \ldots,$ we have  
\begin{equation}
  \kappa_{2n-1}^{\left(N\right)}=0,\label{eq:oddcumulants}
\end{equation}
\begin{equation}
  \kappa_{2n}^{\left(N\right)}=\frac{9^{n}}{4^{n}-1}\,\frac{4^{n}\left(4^{nN}-N-1\right)+N}{\left(4^{N}-\frac{3}{4}N-1\right)^{n}}\,\frac{B_{2n}}{2n}.\label{eq:cumulants}
\end{equation}
Here $B_{n}, n=1,2,\ldots,$ are the  Bernoulli numbers. Recall that $\{B_{n}\}_{n=0}^{\infty}$
are defined as the coefficients of the generating function 
\[
\sum_{n=0}^{\infty}\frac{B_n\,t^n}{n!}=\frac{t}{e^t-1}.
\]
\end{prop}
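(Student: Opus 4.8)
The plan is to derive everything from the moment generating function of $Z_N$ obtained in Proposition 2.1, formula \eqref{eq:mgf2}. Since $\hat{Z}_N = (Z_N - \mu_N)/\sigma_N$, we have $\psi_N(t) = e^{-t\mu_N/\sigma_N}\,{\bf E}\hbox{e}^{(t/\sigma_N)Z_N}$. Writing $s = t/\sigma_N$ and using $1 - \hbox{e}^{s2^k} = -\hbox{e}^{s2^k/2}\cdot 2\sinh(s2^k/2)$ together with $1-\hbox{e}^s = -\hbox{e}^{s/2}\cdot 2\sinh(s/2)$, each factor in \eqref{eq:mgf2} becomes $\frac{1}{2^k}\hbox{e}^{s(2^k-1)/2}\,\frac{\sinh(s2^k/2)}{\sinh(s/2)}$. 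The product of the exponential prefactors is $\hbox{e}^{s\sum_{k=1}^N(2^k-1)/2} = \hbox{e}^{s\mu_N}$ by Proposition 2.3, and this exactly cancels the centering factor $\hbox{e}^{-t\mu_N/\sigma_N} = \hbox{e}^{-s\mu_N}$. This yields the first expression in \eqref{eq:cf}.

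For the second expression I would iterate the duplication formula $\sinh(2u) = 2\sinh u\cosh u$: telescoping gives $\frac{\sinh(2^k u)}{\sinh u} = 2^k\prod_{j=0}^{k-1}\cosh(2^j u)$, so with $u = t/(2\sigma_N)$ each factor $\frac{1}{2^k}\frac{\sinh(2^k u)}{\sinh u}$ equals $\prod_{j=0}^{k-1}\cosh(2^j u)$. Taking the product over $k = 1,\dots,N$ and substituting $l = k-j$ in the inner product (so that $l$ runs from $1$ to $k$ and the exponent becomes $2^{k-l}$) gives $\prod_{1\le l\le k\le N}\cosh(2^{k-l}u)$, the claimed form. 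The cumulant generating function is then immediate by taking $\log$ of the first product.

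For the cumulants the key analytic input is the classical expansion, valid near $x=0$,
\[
\log\frac{\sinh x}{x} = \sum_{n=1}^\infty \frac{2^{2n}B_{2n}}{2n\,(2n)!}\,x^{2n},
\]
which follows by integrating the well-known series $x\coth x = \sum_{n\ge0}\frac{2^{2n}B_{2n}}{(2n)!}x^{2n}$ and using $\sinh x/x\to1$ to fix the constant. Since $Z_N$ is bounded, $\psi_N$ is entire with $\psi_N(0)=1$, so $\log\psi_N$ is analytic near the origin, and
\[
\log\left(\frac{1}{2^k}\,\frac{\sinh(2^k u)}{\sinh u}\right) = \log\frac{\sinh(2^k u)}{2^k u} - \log\frac{\sinh u}{u} = \sum_{n=1}^\infty \frac{2^{2n}B_{2n}}{2n\,(2n)!}\,(4^{nk}-1)\,u^{2n}.
\]
Summing over $k = 1,\dots,N$ with $u = t/(2\sigma_N)$, the only $N$-dependent sum is the geometric series $\sum_{k=1}^N(4^{nk}-1) = \frac{4^n(4^{nN}-N-1)+N}{4^n-1}$; the powers of $2$ collapse via $2^{2n}/(2\sigma_N)^{2n}=\sigma_N^{-2n}$, and substituting $\sigma_N^{-2n} = 9^n/(4^N - \tfrac34 N - 1)^n$ from Proposition 2.3 (recall $2^{2N}=4^N$) produces exactly \eqref{eq:cumulants} after reading off the coefficient of $t^{2n}/(2n)!$. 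Because $\log\psi_N(t)$ is an even function of $t$, all odd-order coefficients vanish, which is \eqref{eq:oddcumulants}. I expect the only delicate points to be bookkeeping: confirming that the exponential prefactor really equals $\hbox{e}^{s\mu_N}$, correctly reindexing the double product, and keeping track of the powers of $2$ in the final simplification; the remainder is routine power-series manipulation.
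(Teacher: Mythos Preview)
Your argument is correct and follows essentially the same route as the paper: both reduce each factor of the centered moment generating function to a $\sinh$-ratio, pass to the $\cosh$-product via the duplication identity $\sinh(2u)=2\sinh u\cosh u$ (the paper states this as the equivalent trigonometric identity), and then expand $\log(\sinh x/x)$ in the Bernoulli-number series before summing the geometric series $\sum_{k=1}^{N}(4^{nk}-1)$. The only noticeable difference is that you make explicit the cancellation between the centering factor $e^{-s\mu_N}$ and the prefactors $e^{s(2^k-1)/2}$, a step the paper leaves to the reader; otherwise the two proofs coincide.
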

\begin{proof}
The first step is to use an elementary identity:  
\[
\frac{1}{2^{k}}\,\frac{\sin\left(2^{k}x\right)}{\sin x}=\prod_{l=1}^{k}\cos\left(2^{k-l}x\right) \ \Rightarrow \ 
{\bf E}\hbox{e}^{t\hat{Z}_{N}}=\prod_{1\le l\le k\le N}\cosh\left(2^{k-l}\,\frac{t}{2\sigma_{N}}\right).
\]
The next is to write the cumulant generating function of $\hat{Z}_{N}$
in terms of the cumulants. We have 
\[
\log \psi_N(t)=\sum_{k=1}^{N}\log\left(\frac{1}{2^{k}}\,\frac{\sinh\left(\frac{t}{2\sigma_{N}}\,2^{k}\right)}{\sinh\left(\frac{t}{2\sigma_{N}}\right)}\right)=\sum_{n=1}^{\infty}\kappa_{2n}^{\left(N\right)}\,\frac{t^{2n}}{2n!}.
\]
It follows that $\kappa_{2n-1}^{\left(N\right)}=0,$ as claimed in
\eqref{eq:oddcumulants}.

From the identity \cite[1.518.1]{Gradshteyn}, for any real  $c$ and $t$ such that  $0<ct<\pi,$
\[
\log\sinh\left(ct\right)=\log\left(ct\right)+\sum_{n=1}^{\infty}\frac{B_{2n}}{2n}\left(2c\right)^{2n}\,\frac{t^{2n}}{2n!},
\]
we obtain 
\begin{align*}
\log \psi_{N}(t) & =\sum_{k=1}^{N}\left(\log\frac{1}{2^{k}}+\log\sinh\left(\frac{t}{2\sigma_{N}}\,2^{k}\right)-\log\sinh\left(\frac{t}{2\sigma_{N}}\right)\right)\\
 & =\sum_{k=1}^{N}\left(\log\frac{1}{2^{k}}+\log\left(\frac{t}{2\sigma_{N}}\,2^{k}\right)-\log\left(\frac{t}{2\sigma_{N}}\right)\right)\\
 & +\sum_{k=1}^{N}\sum_{n=1}^{\infty}\frac{B_{2n}}{2n}\,\frac{t^{2n}}{2n!}\left\{ \frac{2^{2kn}}{\sigma_{N}^{2n}}-\frac{1}{\sigma_{N}^{2n}}\right\} .
\end{align*}
Therefore, 
\begin{align*}
\log \psi_N(t) & =\sum_{k=1}^{N}\sum_{n=1}^{\infty}\frac{B_{2n}}{2n}\,\frac{2^{2nk}-1}{\sigma_{N}^{2n}}\,\frac{t^{2n}}{2n!}\\
 & =\sum_{n=1}^{\infty}\frac{B_{2n}}{2n}\left(\sum_{k=1}^{N}\frac{2^{2nk}-1}{\sigma_{N}^{2n}}\right)\frac{t^{2n}}{2n!}\\
 & =\sum_{n=1}^{\infty}\frac{B_{2n}}{2n}\,\frac{4^{n}\left(4^{nN}-N-1\right)+N}{\sigma_{N}^{2n}\left(4^{n}-1\right)}\,\frac{t^{2n}}{2n!}.
\end{align*}
By using the expression for $\sigma_{N}^{2},$ we find exactly 
the value \eqref{eq:cumulants} for the even order cumulants $\kappa_{2n}^{\left(N\right)}.$
Hence, the desired result. 
\end{proof}
Knowing explicitly the cumulants $\kappa_{2n-1}^{\left(N\right)}$
and $\kappa_{2n}^{\left(N\right)}$ allows in turn to write another
stochastic representation for the random variable $\hat{Z}_{N}.$ Let us introduce first some notation. 
Here and below the standard abbreviation `i.i.d.' is used for `independent and identically distributed' random variables. 
Define the following random variable: 
\begin{equation}
X=\sum_{k=1}^{\infty}\frac{U_{k}}{2^{k}},\label{eq:X}
\end{equation}
where $\left\{ U_{k}\right\} _{k=1}^{\infty}$ are i.i.d. random variables
with continuous uniform distribution over the interval $\left[-3,3\right],$
and consider an infinite sequence  $X_{0,}X_{1}, X_2, \ldots$
 of independent copies of $X.$ 

Let now $L$ be a random variable with the square
hyperbolic secant distribution, its density is 
\begin{equation}
\label{sech}
f_{L}\left(x\right)=\frac{2\pi}{\left(\hbox{e}^{\pi x}+\hbox{e}^{-\pi x}\right)^{2}}, \ x\in\mathbb{R}.
\end{equation}
It is easy to find the cumulants of $L,$ namely: 
\[
\kappa_{2n+1}\left(L\right)=0,\,\,\,\,\kappa_{2n}\left(L\right)=-6^{2n}\frac{B_{2n}}{2n}.
\]

The next step is to define a complex-valued random variable $W,$ as follows:
\[
W=6{\bf i} L, \ \mbox{ where } \ {\bf i} = {\sqrt {-1}}, 
\]
and let $W_{1},W_{2},\ldots,$ be i.i.d. copies of $W.$ Based on these, we introduce the random variable 
\[
Y=\sum_{k=1}^{\infty}\frac{W_{k}}{2^{k}}
\]
and consider an infinite sequence  $Y_{1},Y_{2},\ldots$ of independent copies of \ $Y$. 

\begin{thm}%th6
With the notation 
\[
c_{N}=\sqrt{4^{N}-\frac{3}{4}N-1},
\]
the random variable $\hat{Z}_{N}$ has the following stochastic representation:
\begin{equation}
\hat{Z}_{N} \ \overset{d}{=} \ \frac{1}{c_{N}}\left[2^{N}X_{0}+\left(Y_{1}+\dots+Y_{N+1}\right)+\frac{1}{2}\left(X_{1}+\dots+X_{N}\right)\right].\label{eq:ZN stochastic}
\end{equation}
\end{thm}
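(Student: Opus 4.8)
The plan is to verify \eqref{eq:ZN stochastic} at the level of moment generating functions. Write $R_N$ for the right-hand side of \eqref{eq:ZN stochastic}. Since $Z_N$, and hence $\hat Z_N$, is bounded, the law of $\hat Z_N$ is uniquely determined by $\psi_N$ of \eqref{eq:cf}; and since $Y$, hence $R_N$, is complex-valued, the symbol $\overset{d}{=}$ in \eqref{eq:ZN stochastic} has to be read as the identity ${\bf E}e^{t\hat Z_N}={\bf E}e^{tR_N}$ for all real $t$. So it suffices to compute ${\bf E}e^{tR_N}$ and recognize it as $\psi_N(t)$.

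The computation hinges on one reciprocity. Put $a(z):=\sinh(3z)/(3z)={\bf E}e^{zU_1}$, the moment generating function of the uniform law on $[-3,3]$, so that $M_X(s):={\bf E}e^{sX}=\prod_{k\ge1}a(s/2^k)$. On the other hand, $L$ from \eqref{sech} is a rescaled logistic variable, with characteristic function $v\mapsto(v/2)/\sinh(v/2)$, whence ${\bf E}e^{sW}={\bf E}e^{{\bf i}(6s)L}=3s/\sinh(3s)=1/a(s)$; summing over the dyadic scales as in the definitions of $X$ and $Y$ then gives ${\bf E}e^{sY}=1/M_X(s)$. (Equivalently, the even cumulants obey $\kappa_{2n}(W)=-\kappa_{2n}(U_1)=-6^{2n}B_{2n}/(2n)$, so that $\kappa_{2n}(Y)=-\kappa_{2n}(X)$.)

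Using independence of $X_0,Y_1,\dots,Y_{N+1},X_1,\dots,X_N$ together with the relation $c_N=3\sigma_N$, I would then write
\[
{\bf E}e^{tR_N}=M_X\!\left(\frac{2^Nt}{c_N}\right)\,M_X\!\left(\frac{t}{c_N}\right)^{-(N+1)}\,M_X\!\left(\frac{t}{2c_N}\right)^{N},
\]
expand each factor through $M_X(s)=\prod_{k\ge1}a(s/2^k)$, and, with $u:=t/c_N$, count the exponent with which each $a(u/2^j)$, $j\in\mathbb Z$, appears: it equals $0$ for $j\ge2$ and for $j\le-N$, equals $-N$ for $j=1$, and equals $+1$ for $1-N\le j\le0$. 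Thus all but finitely many factors cancel, and recombining the surviving powers of $2$ and of $t$ leaves ${\bf E}e^{tR_N}=\prod_{k=1}^{N}2^{-k}\sinh(2^{k}\tau)/\sinh(\tau)$ with $\tau:=t/(2\sigma_N)$, which is exactly $\psi_N(t)$ by \eqref{eq:cf}. This establishes \eqref{eq:ZN stochastic}.

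The one delicate step is this cancellation — keeping precise track of which factors $a(u/2^j)$ survive and with what multiplicity; everything else is routine substitution, together with the conceptual remark made above that here $\overset{d}{=}$ means equality of the (entire) moment generating functions. A slightly longer, cancellation-free alternative is to match cumulants directly: from $\kappa_{2n}(X)=6^{2n}B_{2n}/((4^n-1)2n)$ and $\kappa_{2n}(Y)=-\kappa_{2n}(X)$, independence gives $\kappa_{2n}(R_N)=c_N^{-2n}\bigl(2^{2nN}-(N+1)+2^{-2n}N\bigr)\,\kappa_{2n}(X)$, and a short rearrangement using $6^{2n}/4^n=9^n$ and $c_N^2=4^N-\tfrac{3}{4}N-1$ reproduces $\kappa_{2n}^{(N)}$ of \eqref{eq:cumulants}; boundedness of $\hat Z_N$ then again yields \eqref{eq:ZN stochastic}.
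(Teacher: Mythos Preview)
Your proof is correct, and your primary route is genuinely different from the paper's. The paper works entirely at the level of cumulants: it takes the closed form \eqref{eq:cumulants}, expands $1/(4^{n}-1)=\sum_{k\ge1}2^{-2kn}$, and then identifies the three resulting pieces as $\kappa_{2n}(X_0^{(N)})$, $\kappa_{2n}(Y^{(N)})$, $\kappa_{2n}(T^{(N)})$ respectively, invoking moment determinacy of the bounded variable $\hat Z_N$ at the end. Your main argument instead works directly with moment generating functions and hinges on the reciprocity ${\bf E}e^{sW}=1/a(s)$, hence $M_Y=1/M_X$: this makes the role of the $Y$-block completely transparent (it cancels the tail of the dyadic product coming from $X_0$) and reduces the verification to a clean bookkeeping of the exponents of $a(u/2^{j})$, after which the surviving finite product is literally \eqref{eq:cf}. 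The ``cancellation-free alternative'' you sketch at the end is essentially the paper's proof. What your approach buys is a self-contained computation that never needs the explicit cumulant formula \eqref{eq:cumulants} or the separate identification of each summand's cumulants; what the paper's approach buys is consistency with the cumulant-centric narrative used in the surrounding sections and in the limit arguments of Section~4. Your explicit remark that, because $Y$ is complex-valued, $\overset{d}{=}$ must here be read as equality of entire moment generating functions is a clarification the paper leaves implicit.
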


\begin{proof}
The idea is to use the properties of the cumulants of a random variable and the one-to-one 
relations between the moments and the cumulants; see Shiryaev (2016). 
Recall that for any random variable $\xi,$ \ $\kappa_{n}\left(\lambda\xi\right)=\lambda^{n}\kappa_{n}\left(\xi\right)$
for $\lambda\in\mathbb{R},$ and if $\eta$ is another random variable
independent of $\xi,$ we have \ $\kappa_{n}\left(\xi+\eta\right)=\kappa_{n}\left(\xi\right)+\kappa_{n}\left(\eta\right).$ 
The latter property justifies the use in the literature of the term `semiinvariants' instead of cumulants; see Janson (1988) 
or Shiryaev (2016). 
Moreover, if $\xi$ is symmetric with all moments finite, then all odd order cumulants are zero:
$\kappa_{2n-1}\left(\xi\right)=0.$ If $\xi$ and its distribution
are uniquely determined by the moment sequence $\left\{ {\bf E}\left[\xi^{k}\right],\thinspace\thinspace k=1,2,\dots\right\},$
and hence, by the cumulant sequence $\left\{ \kappa_{n}\left(\xi\right),n=1,2,\dots\right\} ,$
in this case a converse statement is valid:  If all cumulants
$\kappa_{n}\left(\xi\right),n=1,2,\dots$ are finite and $\kappa_{2n-1}\left(\xi\right)=0,n=1,2,\dots,$
then $\xi$ is symmetric. Also, assuming that if 
$\xi$ and $\eta$ are independent random variables, each uniquely determined by its moments, 
hence also by its cumulants, and  if $\zeta$ is a random variable such that for
all $n,$ \ $\kappa_{n}\left(\zeta\right)=\kappa_{n}\left(\xi\right)+\kappa_{n}\left(\eta\right),$
one can conclude that $\zeta\overset{d}{=}\xi+\eta.$

We have found, see \eqref{eq:cumulants}, the cumulants $\kappa_{2n}^{\left(N\right)}\left(=\kappa_{2n}\left(\hat{Z}_{N}\right)\right),$
and we want to show that
\begin{equation}
\kappa_{2n}\left(\hat{Z}_{N}\right)=\kappa_{2n}\left(X_{0}^{\left(N\right)}\right)+\kappa_{2n}\left(Y^{\left(N\right)}\right)+\kappa_{2n}\left(T^{\left(N\right)}\right),\label{eq:kappazn}
\end{equation}
where
\[
X_{0}^{\left(N\right)}=\frac{2^{N}}{c_{N}}X_{0}, \ Y^{\left(N\right)}=\frac{1}{c_{N}}\left(Y_{1}+\dots+Y_{N+1}\right), 
 \ T^{\left(N\right)}=\frac{1}{2c_{N}}\left(X_{1}+\dots+X_{N}\right).
\]
Since the random variables $X_{0},X_{1},X_{2},\dots, \ Y_{1}, Y_{2},\dots$ have simple structure,
they are all independent and symmetric with easily computable cumulants $\kappa_{2n}\left(X_{i}\right),\kappa_{2n}\left(Y_{i}\right),$
 we write explicitly the cumulants $\kappa_{2n}\left(X_{0}^{\left(N\right)}\right),\kappa_{2n}\left(Y_{0}^{\left(N\right)}\right)$
and $\kappa_{2n}\left(T^{\left(N\right)}\right).$ Then we find explicitly the cumulant $\kappa_{2n}\left(\hat{Z}_{N}\right):$   
\begin{align*}
\kappa_{2n}\left(\hat{Z}_{N}\right)=\kappa_{2n}^{\left(N\right)} & =\frac{1}{c_{N}^{2n}}\,6^{2n}\,\frac{B_{2n}}{2n}\,\frac{\left(2^{2nN}-N-1\right)+N\,2^{-2n}}{2^{2n}-1}\\
 & =\frac{1}{c_{N}^{2n}}\,6^{2n}\,\frac{B_{2n}}{2n}\sum_{k = 1}^{\infty}2^{-2kn}\left(2^{2nN}-N-1+N\,2^{-2n}\right)\\
 & =\frac{1}{c_{N}^{2n}}\,6^{2n}\,\frac{B_{2n}}{2n}\sum_{k = 1}^{\infty}\left(2^{2\left(N-k\right)n}-\left(N+1\right)2^{-2kn}+N\,2^{-2n\left(k+1\right)}\right).
\end{align*}

The last sum can be written as a sum of three terms, by keeping the factor before it, of course.  
The first term can be expressed as a cumulant, namely: 
\[
\frac{1}{c_{N}^{2n}}\,6^{2n}\,\frac{B_{2n}}{2n}\sum_{k = 1}^{\infty}2^{2\left(N-k\right)n}=\kappa_{2n}\left(X_{0}^{\left(N\right)}\right).
\]
The second term is 
\[
-\left(N+1\right)\frac{6^{2n}}{c_{N}^{2n}}\,\frac{B_{2n}}{2n}\sum_{k = 1}^{\infty}2^{-2kn}=\kappa_{2n}\left(Y_{0}^{\left(N\right)}\right).
\]
Finally, the third term is 
\[
\frac{N}{c_{N}^{2n}}\,6^{2n}\,\frac{B_{2n}}{2n}\sum_{k=1}^{\infty}2^{-2n\left(k+1\right)}=\kappa_{2n}\left(T^{\left(N\right)}\right).
\]
These three observations and the above comments on  properties of the cumulants justify
the equality \eqref{eq:kappazn}. Since 
\[
\kappa_{2n-1}\left(X_{0}^{\left(N\right)}\right)=0, \ \kappa_{2n-1}\left(Y^{\left(N\right)}\right)=0, \ 
\kappa_{2n-1}\left(T^{\left(N\right)}\right)=0 
\]
\[
 \Rightarrow \quad \kappa_{2n-1}\left(\hat{Z}_{N}\right)=0,
\]
the proof of the representation \eqref{eq:ZN stochastic} is completed.
\end{proof}

\begin{rem}%rem7
By noting that
\[
\lim_{N\to\infty}\frac{2^{N}}{c_{N}}=1,
\]
we see that, as $N\to\infty,$ only the first term in \eqref{eq:ZN stochastic}
remains. This follows from the fact that   
\[
Y^{\left(N\right)}=\frac{1}{c_{N}}\left(Y_{1}+\dots+Y_{N+1}\right)  \overset{P}{\to}  0  \ 
\mbox{ and } \ T^{\left(N\right)}=\frac{1}{c_{N}}\left(X_{1}+\dots+X_{N}\right)  \overset{P}{\to}  0. 
\]
For these two relations, we just apply Chebyshev inequality. Therefore,  since $X_0^{(N)}, Y^{(N)}$ and $T^{(N)}$ are independent, 
we refer to Slutsky's theorem and  conclude that 
\[
\hat{Z}_{N} \ \overset{d}{\to} \ X_{0} \ \mbox{ as } \ N \to \infty.  
\]
\end{rem}

\section{Asymptotic Results}%sec4

The limiting values of the cumulants $\kappa_{j}^{\left(N\right)}=\kappa_{j}\left(\hat{Z}_{N}\right)$
as $N\to\infty$ can be explicitly obtained from the expressions \eqref{eq:oddcumulants} and  \eqref{eq:cumulants}. 
\begin{cor} 
There is a random variable, $Z_*$, such that 
 $\hat{Z}_{N}\overset{d}{\to}Z_{*}$ as $N\to\infty$ and if $\Lambda$ is the distribution function of $Z_*$, then 
the cumulants of $\Lambda,$ $\kappa^*_n = \kappa_n (Z_*), n=1,2, \ldots, $ are as follows: 
\[
\kappa_{2n-1}^{*}=\lim_{N\to\infty}\kappa_{2n-1}^{\left(N\right)}=0, \quad \kappa_{2n}^{*}=\lim_{N\to\infty}\kappa_{2n}^{(N)}
=\frac{B_{2n}}{2n}\,\frac{6^{2n}}{2^{2n}-1},\,\,n=1,2,\ldots
\]
This clearly shows that the limit distribution is not Gaussian. 
\end{cor}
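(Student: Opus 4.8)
The plan is to read the limiting cumulants straight off the explicit formula \eqref{eq:cumulants} and to take the convergence $\hat{Z}_{N}\overset{d}{\to}Z_{*}$ from Remark 3.2, whose stochastic representation already identifies the limit as $Z_{*}=X_{0}$. First I would note that each $\hat{Z}_{N}$ is bounded, being an affine image of the bounded sum $Z_{N}$; hence all of its cumulants are finite and the expansions used in the proof of Proposition 3.1 are legitimate. In particular \eqref{eq:oddcumulants} gives $\kappa_{2n-1}^{(N)}=0$ for every $N$, so $\kappa_{2n-1}^{*}=\lim_{N\to\infty}\kappa_{2n-1}^{(N)}=0$.

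For the even cumulants I would pass to the limit in \eqref{eq:cumulants}: in its last fraction the numerator equals $4^{n(N+1)}\bigl(1+o(1)\bigr)$ and the denominator equals $4^{nN}\bigl(1+o(1)\bigr)$ as $N\to\infty$, so the fraction tends to $4^{n}$ and
\[
\kappa_{2n}^{*}=\frac{9^{n}}{4^{n}-1}\,\frac{B_{2n}}{2n}\cdot 4^{n}=\frac{36^{n}}{4^{n}-1}\,\frac{B_{2n}}{2n}=\frac{6^{2n}}{2^{2n}-1}\,\frac{B_{2n}}{2n},
\]
which is the asserted value; a reassuring check is $\kappa_{2}^{*}=\tfrac{B_{2}}{2}\cdot\tfrac{36}{3}=1$, consistent with $\mathbf{Var}\,\hat{Z}_{N}=1$. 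As an independent verification of the whole formula I would recompute $\kappa_{2n}$ from $X_{0}=\sum_{k\ge1}U_{k}/2^{k}$: additivity of cumulants over independent summands and $\kappa_{2n}(\lambda\xi)=\lambda^{2n}\kappa_{2n}(\xi)$ give $\kappa_{2n}(X_{0})=\bigl(\sum_{k\ge1}2^{-2nk}\bigr)\kappa_{2n}(U_{1})=\tfrac{1}{4^{n}-1}\kappa_{2n}(U_{1})$, while the expansion $\log\frac{\sinh(ct)}{ct}=\sum_{n\ge1}\frac{B_{2n}}{2n}(2c)^{2n}\frac{t^{2n}}{2n!}$ used earlier, with $c=3$, gives $\kappa_{2n}(U_{1})=6^{2n}\frac{B_{2n}}{2n}$; the two answers coincide.

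For the existence of $Z_{*}$ the cleanest route is to cite Remark 3.2, which already yields $\hat{Z}_{N}\overset{d}{\to}X_{0}$ via Theorem 3.1, Chebyshev's inequality and Slutsky's theorem, so we may take $Z_{*}=X_{0}$ and $\Lambda$ its law; the cumulants of this $Z_{*}$ are those just computed. Alternatively, since $|X_{0}|\le\sum_{k\ge1}3\cdot2^{-k}=3$, the law $\Lambda$ is supported on $[-3,3]$ and is therefore determined by its moments, so one may instead invoke the cumulant-convergence criterion of Janson (1988): convergence, for every fixed order, of the cumulants of $\hat{Z}_{N}$ to those of a moment-determinate law forces $\hat{Z}_{N}\overset{d}{\to}Z_{*}$. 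Non-Gaussianity is then immediate: $\kappa_{4}^{*}=\tfrac{6^{4}}{2^{4}-1}\cdot\tfrac{B_{4}}{4}=-\tfrac{18}{25}\ne0$, whereas every normal law has all cumulants of order $\ge3$ equal to zero (equivalently, the bounded random variable $X_{0}$ cannot be normal).

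The one point that needs genuine care is the logical order in the last step: one must not deduce the \emph{existence} of $Z_{*}$ from cumulant convergence without first knowing that the limiting cumulant sequence belongs to a moment-determinate distribution, which is exactly what the boundedness of $X_{0}$ (or, more directly, the Slutsky argument of Remark 3.2) supplies. Everything else is routine bookkeeping with \eqref{eq:cumulants}, the only real work being the elementary $N\to\infty$ asymptotics of its last fraction.
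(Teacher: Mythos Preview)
Your proposal is correct and follows essentially the same route as the paper: the paper states, just before the corollary, that the limiting cumulants are read off directly from \eqref{eq:oddcumulants} and \eqref{eq:cumulants}, and it takes the convergence $\hat{Z}_{N}\overset{d}{\to}Z_{*}$ either from the Slutsky argument in the remark after Theorem~3.1 or from Janson's cumulant-convergence criterion (both of which you offer). Your write-up is in fact more careful than the paper's one-line justification, since you make explicit the asymptotics of the middle fraction in \eqref{eq:cumulants}, cross-check against the cumulants of $X_{0}$, and flag the moment-determinacy issue needed before invoking Janson.
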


\begin{rem}%\rem9
The interesting question now is about the behaviour of the cumulants $\kappa_{2n}^{*}$
for large $n.$ It is clear that  
\[
\kappa_{2n}^{*} \ \approx \ \frac{B_{2n}}{2n}\,3^{2n} \ \mbox{ for large } \ n.
\]
Notice however that \ $\frac{B_{2n}}{2n}\,3^{2n}$ \ is exactly the cumulant
of order $2n$ of the uniform absolutely continuous distribution on
the interval $\left[-3,3\right].$ This fact can eventually be used to find approximations for 
the limit distribution $\Lambda.$
\end{rem}

Another conclusion can be derived from Corollary 8.

\begin{prop}%\prop10
(a) The limiting random variable $Z_*$ and its distribution
$\Lambda$ are uniquely determined by both the moments and the cumulants.

(b) Moreover, $Z_*$ admits the following stochastic representation:
\begin{equation}
Z_* \ \overset{d}{=} \ \sum_{i=1}^{\infty}\frac{U_{i}}{2^{i}},\label{eq:stochastic}
\end{equation}
where, as before, $U_{1},U_{2},\ldots,$ are independent absolutely continuous random variables which are uniformly distributed on
$\left[-3,3\right].$ 

As a consequence, the variable $Z_*$ is bounded and its 
 support is \ $\left[-3, 3\right]$. 
\end{prop}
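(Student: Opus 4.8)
The plan is to pin the limit law $\Lambda$ down explicitly by identifying $Z_{*}$ in distribution with $S:=\sum_{i=1}^{\infty}U_{i}/2^{i}$, where $U_{1},U_{2},\ldots$ are i.i.d. uniform on $[-3,3]$; once that identification is in hand, parts (a), (b), the boundedness, and the support claim all follow at once. First I would check that $S$ is well defined: since $|U_{i}/2^{i}|\le 3/2^{i}$ and $\sum_{i\ge1}3/2^{i}=3$, the series converges absolutely almost surely and $|S|\le 3$. A bounded random variable has moments of every order with $|{\bf E}S^{k}|\le 3^{k}$, so its moment sequence grows at most geometrically; Carleman's criterion is therefore satisfied trivially and $S$ is determined by its moments, equivalently (the moment--cumulant correspondence being a polynomial bijection) by its cumulants. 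For the support I would use a greedy steering argument: given $y\in[-3,3]$ one can pick $u_{1},u_{2},\ldots\in[-3,3]$ so that the residuals $r_{k}=y-\sum_{i\le k}u_{i}/2^{i}$ satisfy $|r_{k}|\le 3/2^{k}$ for every $k$, because at step $k$ it suffices that $u_{k+1}$ lie in $[\,2^{k+1}r_{k}-3,\ 2^{k+1}r_{k}+3\,]$, an interval of length $6$ centred at a point of modulus $|2^{k+1}r_{k}|\le 6$, hence meeting $[-3,3]$. Since every $U_{i}$ has full support $[-3,3]$, a neighbourhood of such a steering sequence carries positive probability, so every $y\in[-3,3]$ lies in the support of $S$; together with $|S|\le 3$ this yields $\mathrm{supp}(S)=[-3,3]$.

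Next I would compute the cumulants of $S$. A single $U_{i}$ has ${\bf E}\hbox{e}^{tU_{i}}=\sinh(3t)/(3t)$, and since $\sinh x/x=1+O(x^{2})$ with $\sum_{i\ge1}(3/2^{i})^{2}<\infty$, the infinite product $\prod_{i\ge1}{\bf E}\hbox{e}^{tU_{i}/2^{i}}=\prod_{i\ge1}\sinh(3t/2^{i})/(3t/2^{i})$ converges for every $t$ and, by bounded convergence applied to the partial sums, equals ${\bf E}\hbox{e}^{tS}$. Taking logarithms and comparing Taylor coefficients near $t=0$ gives $\kappa_{n}(S)=\sum_{i\ge1}2^{-ni}\kappa_{n}(U_{1})=\kappa_{n}(U_{1})/(2^{n}-1)$. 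From the identity \cite[1.518.1]{Gradshteyn} already used above, $\log\bigl(\sinh(3t)/(3t)\bigr)=\sum_{n\ge1}\frac{B_{2n}}{2n}\,6^{2n}\,\frac{t^{2n}}{(2n)!}$, so $\kappa_{2n-1}(U_{1})=0$ and $\kappa_{2n}(U_{1})=\frac{B_{2n}}{2n}6^{2n}$. Hence $\kappa_{2n-1}(S)=0$ and $\kappa_{2n}(S)=\frac{B_{2n}}{2n}\,\frac{6^{2n}}{2^{2n}-1}$, which is exactly the sequence $\kappa_{2n}^{*}$ obtained in Corollary 8.

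To finish I would close the loop. By Corollary 8 the variable $Z_{*}$ has all cumulants finite and equal to $\kappa_{n}^{*}=\kappa_{n}(S)$, hence the same moments as $S$; since $S$ is moment-determinate, $Z_{*}\overset{d}{=}S=\sum_{i=1}^{\infty}U_{i}/2^{i}$. This is (b), and it makes $Z_{*}$ bounded with support $[-3,3]$; assertion (a) then follows because $Z_{*}\overset{d}{=}S$ and $S$ is determined by its moments and cumulants. As a self-contained alternative to invoking Corollary 8 for the identification, one can pass to the limit directly in $\psi_{N}(t)=\prod_{k=1}^{N}2^{-k}\sinh\bigl(2^{k}t/(2\sigma_{N})\bigr)/\sinh\bigl(t/(2\sigma_{N})\bigr)$: substituting $j=N-k$ and using $2^{N}/\sigma_{N}\to 3$ shows each factor tends to $\sinh(3t/2^{j+1})/(3t/2^{j+1})$ and the products converge, so $\psi_{N}(t)\to\prod_{i\ge1}\sinh(3t/2^{i})/(3t/2^{i})={\bf E}\hbox{e}^{tS}$ for each $t$, which reproves $\hat{Z}_{N}\overset{d}{\to}S$.

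The main obstacle is the careful treatment of determinacy rather than any single computation: the cumulants $\kappa_{2n}^{*}\sim\frac{B_{2n}}{2n}3^{2n}$ grow faster than geometrically, so one must not argue through a positive radius of convergence of a moment or cumulant generating function; the correct route is to exploit the boundedness of $S$ to get the geometric growth of its moments and hence Carleman's condition. Two further points need care: justifying that cumulants add over the infinite independent sum defining $S$ (for which the uniform-in-$t$ convergence of the product of $\sinh$-factors is the input), and the support computation, which needs the explicit greedy construction rather than a soft argument.
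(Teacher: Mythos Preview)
Your argument is correct and follows the paper's own route: match the cumulants of $S=\sum_{i\ge1}U_i/2^i$ with the limiting cumulants $\kappa_{2n}^{*}$ from Corollary~8, then invoke moment determinacy. The paper's proof is terser---it simply rewrites $\kappa_{2n}^{*}=\kappa_{2n}(U)\sum_{i\ge1}2^{-2ni}=\kappa_{2n}\bigl(\sum_i U_i/2^i\bigr)$ and stops---whereas you add the explicit Carleman justification, a support computation via greedy steering, and an alternative direct MGF limit. All of this is fine and more complete than the original.

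One remark: your closing paragraph overstates the ``main obstacle''. Because $|S|\le3$, the moment generating function ${\bf E}\hbox{e}^{tS}$ is finite for every real $t$; any distribution whose MGF is finite on a neighbourhood of $0$ is automatically determined by its moments, so there is no delicacy here and no need to worry about the growth rate of the cumulants at all. (In fact $|\kappa_{2n}^{*}|/(2n)!\sim (3/2\pi)^{2n}/n$, so the cumulant series itself has radius of convergence $2\pi/3>0$.) Your Carleman argument is valid, but the cleaner and more direct route is simply: $S$ is bounded, hence has an entire MGF, hence is moment-determinate.
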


\begin{proof}
We easily find simple expressions for the cumulants of $U_{i}$, $i= 1, 2, \ldots$, namely: 
\[
\kappa_{2n-1}\left(U_{i}\right)=0, \ \kappa_{2n}\left(U_{i}\right)=6^{2n}\frac{B_{2n}}{2n}. 
\]
Let us rewrite the cumulant of order $2n$ of $Z_*$ as follows:
\begin{align*}
\kappa_{2n}^{*} & =\frac{B_{2n}}{2n}\,\frac{6^{2n}}{2^{2n}-1}=6^{2n}\,\frac{B_{2n}}{2n}\,\frac{2^{-2n}}{1-2^{-2n}}=\kappa_{2n}\left(U\right)\sum_{i=1}^{\infty}\left(\frac{1}{2^{i}}\right)^{2n}\\
 & =\sum_{i=1}^{\infty}\frac{\kappa_{2n}\left(U_{i}\right)}{\left(2^{i}\right)^{2n}}=\sum_{i=1}^{\infty}\kappa_{2n}\left(\frac{U_{i}}{2^{i}}\right)=\kappa_{2n}\left(\sum_{i=1}^{\infty}\frac{U_{i}}{2^{i}}\right).
\end{align*}
For the last equality, we have used the properties of the cumulants of independent random variables; 
here the sum is a converging infinite series. 
Since all $U_{i}$'s have
bounded support,  $\left[-3, 3\right],$ the relation \eqref{eq:stochastic}
shows that the same holds for $Z_*.$ 
\end{proof}
We can make one step more. Namely, to use the representation \eqref{eq:stochastic} for computing 
easily at least the lower order moments of $Z_*.$ 
Below, we correctly change the order of expectation and infinite summation, because of the 
independence of $U_i$'s and the `nice' convergence properties  of 
of the infinite sum. In particular, since  ${\bf E}U_{i}^{2}=3,$ we find 
\[
{\bf E}Z_{*}^{2}={\bf E}\left(\sum_{i=1}^{\infty}\frac{U_{i}}{2^{i}}\right)^{2}=\sum_{i=1}^{\infty}\frac{{\bf E}U_{i}^{2}}{2^{2i}} 
\ \Rightarrow \ 
{\bf E}Z_{*}^{2}=3\sum_{i=1}^{\infty}\frac{1}{2^{2i}}=1.
\]
The moments of higher order can be computed similarly. Another way is to use the one-to-one relations between 
moments and  cumulants; see Shiryaev (2016). For example,   
\[
{\bf E}Z_{*}^{4}=\kappa_{4}+3\kappa_{2}^{2}=-\frac{18}{25}+3=\frac{57}{25}.
\]
More generally, we have the following result. 
\begin{thm}%thm11
The sequence of moments $m_{2n}={\bf E}Z_{*}^{2n}, \ n=1, 2, \ldots,$ satisfies
the recurrences:
\begin{equation}
m_{2n}=\frac{1}{\left(2n+1\right)\left(2^{2n}-1\right)}\sum_{j=1}^{n}\binom{2n+1}{2j+1}3^{2j}m_{2n-2j};\label{eq:rec1}
\end{equation}
\begin{equation}
m_{2n}=\sum_{j=1}^{n}\binom{2n-1}{2j-1}\frac{B_{2j}}{2j}\frac{6^{2j}}{2^{2j}-1}m_{2n-2j};\label{eq:rec2}
\end{equation}
\begin{equation}
m_{2n}=\frac{2^{2n}}{1-2^{2n}}\sum_{j=1}^{n}\binom{2n}{2j}3^{2j}B_{2j}\left(\frac{1}{2}\right)m_{2n-2j},\label{eq:rec3}
\end{equation}
where $B_{2j}\left(\frac{1}{2}\right)$ is the value of the Bernoulli
polynomial of degree $2j$ at $x=\frac{1}{2}.$ 
\end{thm}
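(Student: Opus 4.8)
The plan is to exploit the self-similarity hidden in the stochastic representation \eqref{eq:stochastic}. Splitting off the first summand and noting that $\sum_{i\ge1} U_i/2^i = \tfrac{1}{2} U_1 + \tfrac{1}{2}\sum_{k\ge1} U_{k+1}/2^k$ with the second series an independent copy of $Z_*$, one obtains the distributional identity $Z_* \overset{d}{=} \tfrac{1}{2}(U + Z_*')$, where $U$ is uniform on $[-3,3]$ and $Z_*'$ is independent of $U$ and has the same law as $Z_*$; equivalently $2Z_* \overset{d}{=} U + Z_*'$. In terms of the moment generating function $M(t) = {\bf E}\mathrm{e}^{tZ_*}$ this becomes the functional equation
\[
M(t) = \frac{\sinh(3t/2)}{3t/2}\,M(t/2).
\]
Since $Z_*$ is bounded with support $[-3,3]$ and is determined by its moments (as established above), $M$ is entire and equals $\sum_{n\ge0} m_{2n} t^{2n}/(2n)!$, odd moments vanishing by symmetry, so all the power-series manipulations below are legitimate.

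To obtain \eqref{eq:rec1} I would expand $(U+Z_*')^{2n}$ by the binomial theorem, take expectations, and use independence together with ${\bf E}U^{2j+1}=0$ and ${\bf E}U^{2j} = 3^{2j}/(2j+1)$. This gives $2^{2n} m_{2n} = \sum_{j=0}^{n}\binom{2n}{2j}\frac{3^{2j}}{2j+1}\,m_{2n-2j}$; moving the $j=0$ term to the left-hand side and applying the elementary identity $\binom{2n}{2j}/(2j+1) = \binom{2n+1}{2j+1}/(2n+1)$ yields \eqref{eq:rec1}.

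For \eqref{eq:rec2} I would invoke the classical moment--cumulant recurrence $m_k = \sum_{j=1}^{k}\binom{k-1}{j-1}\kappa_j(Z_*)\,m_{k-j}$, which is valid because all moments of $Z_*$ are finite. Since $\kappa_{2j-1}(Z_*)=0$, only even indices contribute, so $m_{2n} = \sum_{j=1}^{n}\binom{2n-1}{2j-1}\kappa_{2j}^{*}\,m_{2n-2j}$, and substituting $\kappa_{2j}^{*} = \frac{B_{2j}}{2j}\frac{6^{2j}}{2^{2j}-1}$ gives \eqref{eq:rec2}. For \eqref{eq:rec3} I would instead rewrite the functional equation as $M(t/2) = \frac{3t/2}{\sinh(3t/2)}\,M(t)$ and expand the prefactor through the generating function of the Bernoulli polynomials at $x=\tfrac{1}{2}$: since $\sum_{m\ge0}B_m(x)u^m/m! = u\mathrm{e}^{xu}/(\mathrm{e}^u-1)$, setting $x=\tfrac{1}{2}$ and dropping the vanishing odd terms gives $\frac{u/2}{\sinh(u/2)} = \sum_{n\ge0} B_{2n}(\tfrac{1}{2})\,u^{2n}/(2n)!$, hence $\frac{3t/2}{\sinh(3t/2)} = \sum_{n\ge0} B_{2n}(\tfrac{1}{2})3^{2n}t^{2n}/(2n)!$. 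Comparing coefficients of $t^{2n}$ on the two sides of $M(t/2) = \frac{3t/2}{\sinh(3t/2)}M(t)$, isolating the $j=0$ term (where $B_0(\tfrac{1}{2})=1$), and solving for $m_{2n}$ produces \eqref{eq:rec3}, the prefactor $2^{2n}/(1-2^{2n})$ coming out of $1/(4^{-n}-1)$.

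I do not expect a serious obstacle: once the self-similarity is set up, all three identities reduce to routine coefficient bookkeeping. The one point that requires genuine care is the consistent handling of the $j=0$ (constant) term in each of the three coefficient comparisons — precisely that term which, after subtraction, is absorbed into the respective prefactors $(2n+1)(2^{2n}-1)$, the empty leading summand, and $2^{2n}/(1-2^{2n})$ — together with verifying the two auxiliary identities used above, namely the binomial one $\binom{2n}{2j}/(2j+1) = \binom{2n+1}{2j+1}/(2n+1)$ and the Bernoulli-polynomial expansion of $u/(2\sinh(u/2))$.
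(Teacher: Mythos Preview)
Your proposal is correct and follows essentially the same route as the paper: both rely on the self-similarity $2Z_*\overset{d}{=}U+Z_*'$ coming from \eqref{eq:stochastic}, derive \eqref{eq:rec1} by expanding $(U+Z_*')^{2n}$ and isolating the $j=0$ term, and obtain \eqref{eq:rec2} directly from the standard moment--cumulant recursion together with the known values $\kappa_{2j}^*$.

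The only genuine difference is in \eqref{eq:rec3}. The paper reaches the same functional equation $M(t)=\dfrac{3t}{\sinh(3t)}\,M(2t)$ but packages it probabilistically via an auxiliary complex variable $6{\bf i}L$ (with $L$ having the squared hyperbolic secant density), using the identity ${\bf E}({\bf i}L)^{2n}=B_{2n}(\tfrac12)$ and then expanding ${\bf E}(2Z_*+6{\bf i}L)^{2n}$. Your route---inverting the functional equation and expanding $\dfrac{3t/2}{\sinh(3t/2)}=\sum_{n\ge0}B_{2n}(\tfrac12)\,3^{2n}t^{2n}/(2n)!$ directly from the Bernoulli-polynomial generating function---is the same computation stripped of the probabilistic wrapper, and is arguably cleaner since it avoids the formal complex random variable. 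Both yield the same coefficient identity, so the distinction is one of presentation rather than substance.
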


\begin{proof}
As mentioned and used before, all odd-order moments of $Z_*$ vanish, ${\bf E}Z_{*}^{2n-1}=0$.  
Hence we work with the moments $m_{2n}={\bf E}Z_{*}^{2n}.$
Starting from the stochastic representation \eqref{eq:stochastic},
we obtain a chain of equalities: 
\begin{align*}
{\bf E}Z_{*}^{2n} & ={\bf E}\left[\sum_{i=1}^{\infty}\frac{U_{i}}{2^{i}}\right]^{2n}=\frac{1}{2^{2n}}\,{\bf E}\left[\sum_{i=1}^{\infty}\frac{U_{i}}{2^{i-1}}\right]^{2n}\\
 & =\frac{1}{2^{2n}}\,{\bf E}\left[U_{1}+\sum_{i=2}^{\infty}\frac{U_{i}}{2^{i-1}}\right]^{2n}=\frac{1}{2^{2n}}\,{\bf E}\left[U_{1}+Z_{*}\right]^{2n}\\
 & =\frac{1}{2^{2n}}\frac{1}{6}\,{\bf E}\left[\int_{-3}^{+3}\left(u+ Z_{*}\right)^{2n}\hbox{d}u\right]\\
 & =\frac{1}{2^{2n}}\,\frac{1}{6}\,\frac{1}{2n+1}\,{\bf E}\left[\left(3+ Z_{*}\right)^{2n+1}-\left(-3+Z_{*}\right)^{2n+1}\right]\\
 & =\frac{1}{2^{2n}}\,\frac{1}{6}\,\frac{1}{2n+1}\,{\bf E}\left[\left[3+ Z_{*}\right]^{2n+1}+\left[3- Z_{*}\right]^{2n+1}\right]\\
 & =\frac{1}{2^{2n}}\,\frac{1}{6}\,\frac{1}{2n+1}\,2\sum_{j=0}^{n}\binom{2n+1}{2j}3^{2n+1-2j}\,{\bf E}Z_{*}^{2j}.
\end{align*}
After simple algebra (since $m_{2n}$ appears in both the left and the right-hand
sides), we have 
\begin{equation}
\frac{m_{2n}}{3^{2n}}=\frac{1}{\left(2n+1\right)\left(2^{2n}-1\right)}\sum_{j=0}^{n-1}\binom{2n+1}{2j}\frac{m_{2j}}{3^{2j}},
\end{equation}
which is equivalent to \eqref{eq:rec1}.

The second identity, \eqref{eq:rec2},  can be derived from the classical one-to-one moments-cumulants
relations. 

The third one, \eqref{eq:rec3}, can be obtained by observing that, from \eqref{eq:stochastic}, 

\[
2Z_* \ \overset{d}{=} \ \sum_{i=1}^{\infty}\frac{U_{i}}{2^{i-1}} \ \overset{d}{=} \ U_{1}+\sum_{i=1}^{\infty}\frac{U_{i+1}}{2^{i}}, 
\]
so that the following distributional relations hold: 
\[
2Z_* \overset{d}{=} Z_*+U_{\left[-3, 3\right]} \overset{d}{=} Z_*+6U_{\left[0,1\right]}-3. 
\] 
Thus, considering the moment generating functions of both sides, we obtain  
\[
{\bf E}\hbox{e}^{2tZ_*}={\bf E}\hbox{e}^{tZ_*}{\bf E}\hbox{e}^{6tU_{\left[0,1\right]}}\hbox{e}^{-3t}.
\]
Noticing that
\[
\frac{1}{{\bf E}\hbox{e}^{tU_{\left[0,1\right]}}}=\frac{t}{\hbox{e}^{t}-1}={\bf E}\hbox{e}^{t({\bf i} L-\frac{1}{2})},
\]
where the random variable $L$ is defined by its density \eqref{sech}, we obtain that 
\[
{\bf E}\hbox{e}^{2tZ_*}{\bf E}\hbox{e}^{6t\left({\bf i}L-\tfrac{1}{2}\right)}\hbox{e}^{3t}={\bf E}\hbox{e}^{tZ_*} \ 
\Rightarrow \ Z_* \overset{d}{=} 2Z_*+6\left({\bf i}L-\frac{1}{2}\right)+3 \overset{d}{=} 2Z_*+6{\bf i}\,L.
\]
Calculate the moment of order $2n$ of each side in the last relation, and use the fact, 
by the symmetry of $L$, 
 that ${\bf E}\left({\bf i}L\right)^{2n}=B_{2n}\left(\frac{1}{2}\right).$ We obtain
\[
m_{2n}=2^{2n}{\bf E}\left(Z+3B\left(\frac{1}{2}\right)\right)^{2n}=2^{2n}\sum_{k=0}^{n}\binom{2n}{2k}m_{2n-2k}3^{2k}B_{2k}\left(\frac{1}{2}\right), 
\]
and hence 
\[
m_{2n}=\frac{2^{2n}}{1-2^{2n}}\sum_{k=1}^{n}\binom{2n}{2k}3^{2k}B_{2k}\left(\frac{1}{2}\right)m_{2n-2k}, 
\]
which is the desired result. 
\end{proof}

Among the natural further questions arising is the following one: Can we derive the asymptotic behavior
of $m_{2n}$ from the recurrences in Theorem 11? For example, as an illustration, based on \eqref{eq:rec3}, we have found approximate numerical values for the
moments $m_{2n}$ for `small' $n,$ they are given in the next table. 

\vspace{0.5cm}
\begin{center} 
\begin{tabular}{|c|c|c|c|c|c|c|c|c|}
\hline 
$n$  & $1$  & $2$  & $3$  & $4$  & $5$  & $6$  & $7$  & $8$\tabularnewline
\hline 
\hline 
$m_{2n}$  & $1$  & $\frac{57}{25}=2.28$  & $\frac{1749}{245}=7.13878$  & $26.785$  & $113.33214$  & $523.1019$  & $2580.48$  & $13420$\tabularnewline
\hline 
\end{tabular}
\par\end{center} 

\vspace{0.4cm}
\begin{rem} 
If we use relation \eqref{eq:rec3} and induction arguments, we arrive at an upper bound for the moments $m_{2n}={\bf E}Z_*^{2n}$ and 
easily conclude a convergence property for its Lyapunov quantity:  
\[
m_{2n} \leq 9^n \ \mbox{ and } \ \lim_{n \to \infty} (m_{2n})^{1/2n} = 3. 
\]

It is worth mentioning that if having only these properties of the moments of a symmetric distribution, we can conclude that 
the distribution has a support $[-3,3].$ 
\end{rem}

\section{More General Model}%sec5 

It may look strange why exactly the interval $[-3,3]$ appeared as the support of the limiting distribution of the random variable $Z_*$. 
The situation becomes clear after analysing the following more general model.  

Let us use notations similar to those in the Introduction. Instead of the `base' 2,  now we involve an arbitrary positive integer $a \geq 2$ and  define the sample space as follows: 
\[
\Omega_N(a) = \{\omega\}, \ \omega=(k_1,k_2,\ldots,k_N), \quad 0\leq k_j \leq a^j - 1, \ j=1,\ldots, N 
\]
\[
\Rightarrow \ |\Omega_N(a)| = a^{N(N+1)/2}.
\]
To each outcome $\omega$ we assign the same probability: \ ${\bf P}(\omega)=1/a^{N(N+1)/2}.$    

As before, we first define $N$ independent discrete uniform random variables $X_j(a)$, $j = 1, 2, \ldots, N$, 
\[
{\bf P}[X_j(a)=k]= \frac{1}{a^j}, \ k=0, 1, \ldots, a^j - 1,  
\]
and then consider the  random variables 
\[
Z_N(a) = X_1(a) + \ldots + X_N(a), \ N=1, 2, \ldots.   
\]

Define the sequence of integer numbers \  $\{\alpha_{k}^{(N)}(a)\}$  as follows:
\[
\alpha_{k}^{\left(N\right)}(a)=|\left\{ \left(k_{1},\dots,k_{N}\right):0\le k_{j} \le a^j - 1,  j=1, \ldots, N, \  
 \Sigma_{j=1}^Nk_j=k\right\}|,
\]
where \ $k=0, 1,2,\ldots, a(a^N -1)/(a-1)-N.$  This range of $k$ comes from the moment generating function  
$\Sigma_k \alpha_k^{(N)}(a)\,x^k = \Pi_{j=1}^{N} (x^{a^j} - 1)/(x-1).$ 
The sequence $\{\alpha_{k}^{(N)}(a)\}$ is related to restricted partitions of integers. Important for us is the fact that   
\[
{\bf P}[Z_N(a)=k]=\frac{\alpha_k^{(N)}(a)}{a^{N(N+1)/2}}, \ k=0, 1,2,\ldots, a(a^N -1)/(a-1)-N
\]  
and we are interested in the standardized random variable 
\[
{\hat Z}_N(a) = \frac{Z_N(a) - \mu_N(a)}{\sigma_N(a)}.
\]
\begin{prop}%prop5.1
(1) The mean value  and the variance of $Z_{N}(a)$ are: 
\[
\mu_N(a)={\bf E}Z_{N}(a)=\frac{1}{2}\left[a\,\frac{a^{N}-1}{a-1}-N\right], 
\]
\[
\sigma_{N}^{2}(a)= {\bf Var}Z_N(a)=\frac{1}{12}\left[a^{2}\,\frac{1-a^{2N}}{1-a^{2}}-N\right].
\]
(2) The following limiting relation holds: 
\[
{\hat Z}_N(a) \overset{d}{\to} Z_*(a) \ \mbox{ as } N \to \infty, \ \mbox{ where } \ 
{Z}_{*}(a) \overset{d}{=}\sum_{k=1}^{\infty}\frac{U_{k}(a)}{a^{k}}.
\] 
Here $U_{k}(a), \ k=1,2, \ldots,$ is an infinite sequence of independent copies of a random variable $U(a)$ which is 
absolutely continuous and uniformly distributed
over the interval $\left[-3\sqrt{\frac{a^{2}-1}{3}}, \ 3\sqrt{\frac{a^{2}-1}{3}}\right].$ 
\end{prop}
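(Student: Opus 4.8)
The plan is to follow exactly the template established in Sections 2--4 for the base-$2$ case, tracking how each constant depends on $a$. First, for part (1), I would start from the stochastic representation $Z_N(a) \overset{d}{=} X_1(a) + \cdots + X_N(a)$ with the $X_j(a)$ independent and uniform on $\{0,1,\ldots,a^j-1\}$. A uniform variable on $\{0,\ldots,M-1\}$ has mean $(M-1)/2$ and variance $(M^2-1)/12$; applying this with $M = a^j$ and summing the geometric series $\sum_{j=1}^N a^j = a(a^N-1)/(a-1)$ gives $\mu_N(a)$, and summing $\sum_{j=1}^N a^{2j} = a^2(a^{2N}-1)/(a^2-1)$ gives $\sigma_N^2(a)$. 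This is the direct analogue of Proposition 2.3 and requires only elementary algebra.

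For part (2), I would first compute the cumulants of $\hat Z_N(a)$ by the method of Proposition 3.1. The mgf of $X_j(a)$ is $\frac{1}{a^j}\,\frac{1-e^{ta^j}}{1-e^t}$, so $\mathbf{E}e^{t\hat Z_N(a)} = \prod_{j=1}^N \frac{1}{a^j}\,\frac{\sinh(t a^j/(2\sigma_N(a)))}{\sinh(t/(2\sigma_N(a)))}$ after centering (the exponential shift contributes only to $\kappa_1$, which vanishes). Expanding $\log\sinh$ via the Bernoulli-number series $\log\sinh(ct) = \log(ct) + \sum_{n\ge 1}\frac{B_{2n}}{2n}(2c)^{2n}\frac{t^{2n}}{(2n)!}$ exactly as in the proof of Proposition 3.1, the logarithmic terms telescope and one is left with
\[
\log\psi_N(t) = \sum_{n=1}^{\infty}\frac{B_{2n}}{2n}\cdot\frac{1}{\sigma_N(a)^{2n}}\left(\sum_{j=1}^N \frac{a^{2nj}-1}{2^{2n}}\right)\frac{t^{2n}}{(2n)!},
\]
wait---more carefully, $\sum_{j=1}^N (a^{2nj}-1)$, and dividing by $(2\sigma_N(a))^{-2n}$-adjusted factors yields $\kappa_{2n}(\hat Z_N(a))$. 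Letting $N\to\infty$, since $\sigma_N^2(a)\sim a^{2N}/(12\cdot\frac{a^2-1}{a^2})$... the dominant balance is $a^{2nN}/\sigma_N(a)^{2n}$, which has a finite nonzero limit, and the subdominant terms vanish, giving
\[
\kappa_{2n}^*(a) = \frac{B_{2n}}{2n}\cdot\frac{(12\sigma^2_\infty)^{n}}{a^{2n}-1}
\]
for the appropriate normalizing constant; the key is to check this equals $\kappa_{2n}\bigl(\sum_{k\ge 1} U_k(a)/a^k\bigr)$.

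Next I would verify the stochastic representation. A uniform variable $U(a)$ on $[-b,b]$ has $\kappa_{2n}(U(a)) = \frac{B_{2n}}{2n}(2b)^{2n}$ (from $\log\frac{\sinh(bt)}{bt}$), and $\sum_{k\ge 1}\kappa_{2n}(U_k(a)/a^k) = \frac{B_{2n}}{2n}(2b)^{2n}\sum_{k\ge1}a^{-2nk} = \frac{B_{2n}}{2n}\,\frac{(2b)^{2n}}{a^{2n}-1}$. Matching this against the limiting $\kappa_{2n}^*(a)$ determines $2b$, and with $b = 3\sqrt{(a^2-1)/3} = \sqrt{3(a^2-1)}$ one should find $(2b)^{2n} = (12(a^2-1)/4)^n\cdot 4^n \cdots$---I would pin down the constant by checking the $n=1$ case (variance $1$, so $\kappa_2^* = 1$, forcing $(2b)^2/(a^2-1) = 1$, i.e. $b^2 = (a^2-1)/4$; hmm, this disagrees with the claimed interval, so the normalization must absorb a further factor, which is precisely why one must be careful with how $\sigma_N(a)$ enters—the factor $\frac{1}{12}$ versus $\frac{1}{3}$ inside the square roots is the telltale sign). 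Once the cumulant identity is confirmed for all $n$, both $\hat Z_N(a)$'s limit and $Z_*(a)$ are symmetric with all cumulants finite and—being bounded (support inside $[-\text{const},\text{const}]$, hence determined by moments by, e.g., the argument around Remark 12)—the equality of cumulant sequences upgrades to equality in distribution via the uniqueness argument used in Theorem 6; convergence $\hat Z_N(a)\overset{d}{\to}Z_*(a)$ then follows from convergence of cumulants (equivalently moments) to those of the moment-determined limit, invoking the same limit theorem (Janson 1988) cited in the Introduction.

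The main obstacle will be bookkeeping the $a$-dependence of the normalizing constants correctly: getting the half-width of the limiting uniform interval to come out as $\sqrt{3(a^2-1)}$ (equivalently $3\sqrt{(a^2-1)/3}$) rather than off by a factor requires carefully reconciling the $\frac{1}{12}$ in $\sigma_N^2(a)$ with the $\frac{1}{3}$ appearing inside the stated interval, and making sure the telescoping in the $\log\sinh$ expansion is done with the right argument $t/(2\sigma_N(a))$ throughout. Everything else is a routine transcription of the base-$2$ proofs with $2$ replaced by $a$.
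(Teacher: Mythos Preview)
Your approach is exactly what the paper intends: it gives no proof at all for this proposition, only the remark that $a=2$ is the case worked out in detail and that the general $a$ follows by the same computations, which is precisely the transcription you outline (Proposition~2.3 for part~(1), Proposition~3.1 and Proposition~4.1 for part~(2)).

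The ``obstacle'' you flag is only an arithmetic slip: in your $n=1$ check you dropped the factor $B_{2}/2=1/12$, so the correct equation is $\tfrac{1}{12}\cdot\tfrac{(2b)^{2}}{a^{2}-1}=1$, giving $b^{2}=3(a^{2}-1)$ and hence $b=3\sqrt{(a^{2}-1)/3}$, in agreement with the stated interval; once this is fixed the cumulant matching goes through for all $n$ without further difficulty.
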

 
\begin{rem}%rem14
It is easy to see that $a=2$ is exactly the case analysed in all details, see the previous sections. This is why we do not provide 
details here. 
\end{rem}

The statements given above can be reformulated in an equivalent form. To do this, we introduce a random variable, say $V$, 
which is continuous and uniform on the interval $[0,1]$ and let $F$ and $f$ be the easily expressible distribution function and density, 
respectively. Let further $V_1, V_2, \ldots$ be an infinite sequence of independent copies of $V$. 
The `new' quantity $V_*$, where    
\[ 
V_* = \sum_{k=1}^{\infty} \frac{V_k}{2^k}, 
\]
is a well-defined random variable with values in $[0,1]$. The distribution function $F_*$ and the density $f_*$ of $V_*$ are 
easily written as infinite convolutions, respectively of $F$ and $f$, appropriately rescaled.  

While both $F$ and $f$ have simple form, as far as we know, there are no available (short, compact, closed) 
formulas for $F_*$ and $f_*$. Eventually we may use their good approximations.   

Let us return to the random variable $Z_*.$ Since $Z_*=\sum_{k=1}^{\infty} U_k/2^k$, \  
$U_k \overset{d}{=} U$ is uniform on $[-3,3]$,  we have 
\[
U \ \overset{d}{=} \ -3 + 6\,V \quad \Rightarrow \quad Z_* \ \overset{d}{=} \ -3 + 6\,V_*. 
\]
Therefore Proposition 4.1 has the following equivalent form: 

\begin{prop}%prop16
The distribution function $G_*$ and the density $g_*$ of the random variable $Z_*$ can be expressed in terms of $F_*$ and $f_*$ as follows:
\[
G_*(x)={\bf P}[Z_* \leq x]={\bf P}[-3+6\,V_* \leq x] = F_*\left(\frac{x+3}{6}\right),
\]
\[ g_*(x) = \frac{1}{6} \,f_*\left(\frac{x+3}{6}\right), 
 x \in {\mathbb R}.
\]
\end{prop}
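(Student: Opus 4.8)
The plan is to deduce both identities from the single affine distributional relation $Z_* \overset{d}{=} -3 + 6\,V_*$, which is recorded in the paragraph preceding the statement, and then to perform a routine change of variables. First I would make that affine relation precise by coupling the two defining series termwise. Since $U_k \overset{d}{=} -3 + 6 V_k$ for each $k$, realize all the variables on one probability space by setting $U_k = -3 + 6 V_k$ with $\{V_k\}$ i.i.d.\ uniform on $[0,1]$; then the partial sums $\sum_{k=1}^{n} U_k/2^k$ and $\sum_{k=1}^{n} V_k/2^k$ both converge almost surely (the $k$th terms are bounded in absolute value by $C/2^k$), and passing to the limit gives the pathwise identity $\sum_{k\ge 1} U_k/2^k = -3 + 6 \sum_{k\ge 1} V_k/2^k$, hence $Z_* \overset{d}{=} -3 + 6 V_*$. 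Equivalently, one can simply compare the moment generating functions of the two infinite sums, since those are already available in the paper.

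Second, for the distribution function I would exploit the monotonicity of $x \mapsto (x+3)/6$: for every $x \in \mathbb{R}$,
${\bf P}[Z_* \le x] = {\bf P}[-3 + 6V_* \le x] = {\bf P}\bigl[V_* \le (x+3)/6\bigr] = F_*\bigl((x+3)/6\bigr)$,
which is exactly the first claimed identity. Nothing measure-theoretic is needed here beyond the definition of $F_*$ as the distribution function of $V_*$.

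Third, for the density I would differentiate $G_*(x) = F_*\bigl((x+3)/6\bigr)$ by the chain rule. The one point deserving a word of justification is that $V_*$ is absolutely continuous, so that $f_* = F_*'$ exists; this holds because already $V_1/2$ has a bounded density supported on $[0,1/2]$, and $V_*$ is the independent sum of $V_1/2$ and $\sum_{k\ge 2} V_k/2^k$, while the convolution of an absolutely continuous law with any law is absolutely continuous (in fact $f_*$ may be taken continuous, being an infinite convolution of rescaled copies of the bounded density $f$ of $V$). Granting this, the chain rule yields $g_*(x) = \tfrac{1}{6}\,f_*\bigl((x+3)/6\bigr)$ for all $x$.

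I do not expect a genuine obstacle: the only things to handle carefully are the termwise coupling of the two infinite series and the a priori existence of $f_*$, and everything else is a one-line change of variables. The substantive content — that $Z_*$ has support $[-3,3]$, equivalently $V_*$ has support $[0,1]$ — has already been established in the preceding sections, so this proposition is purely a translation between the two normalizations.
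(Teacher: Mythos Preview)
Your proposal is correct and follows exactly the paper's approach: the paper records the affine relation $Z_* \overset{d}{=} -3 + 6\,V_*$ in the paragraph immediately preceding the proposition and then presents the proposition as its direct reformulation, without a separate proof. You have simply filled in the routine change-of-variables and absolute-continuity justifications that the paper leaves implicit.
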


We treat similarly the more general random variable $Z_*(a)$, \ $a \geq 2$. Denote its distribution function and density 
by $G_*^{(a)}$ and $g_*^{(a)}$, respectively. Now $Z_*(a)=\sum_{k=1}^{\infty} U_k(a)/2^k$, where 
$U_k(a) \overset{d}{=} U(a)$ is uniform on the interval $[-b_a, b_a]$, where we have used the notation 
\[
b_a = 3{\sqrt {\frac{a^2-1}{3}}}. 
\]
We use the random variables $V$ and $V_*$ introduced above to easily write the following: 
\[
U(a) \ \overset{d}{=} \ -b_a + 2\,b_a\,V \quad \Rightarrow \quad Z_*(a) \ \overset{d}{=} \ -b_a + 2\,b_a\,V_*. 
\]
Therefore Proposition 5.1(2) has the following equivalent form: 

\begin{prop}%prop16
The distribution function $G_*^{(a)}$ and the density $g_*^{(a)}$ of the random variable $Z_*(a)$ can be expressed in terms of $F_*$ and $f_*$ as follows:
\[
G_*^{(a)}(x)={\bf P}[Z_*(a) \leq x]={\bf P}[-b_a+2\,b_a\,V_* \leq x] = F_*\left(\frac{x+b_a}{2\,b_a}\right), 
\]
\[
g_*^{(a)}(x) = \frac{1}{2\,b_a}\,f_*\left(\frac{x+b_a}{2\,b_a}\right), \ 
 x \in {\mathbb R}.
\]
\end{prop}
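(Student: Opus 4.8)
The plan is to reduce the proposition to a single monotone change of variables, the one already flagged in the paragraph preceding it: once the distributional identity $Z_*(a)\overset{d}{=}-b_a+2b_a V_*$ is established rigorously, both displayed formulas drop out by the chain rule. So the proof splits into two parts: (a) make that identity precise, and (b) transport it to the distribution function $G_*^{(a)}$ and the density $g_*^{(a)}$.

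For part (a) I would start from the representation $Z_*(a)\overset{d}{=}\sum_{k=1}^{\infty}U_k(a)/2^k$, with the $U_k(a)$ i.i.d.\ uniform on $[-b_a,b_a]$, and write each summand as an affine image of a standard uniform variable: since $b_a>0$, the map $v\mapsto -b_a+2b_a v$ carries $[0,1]$ onto $[-b_a,b_a]$ and pushes the uniform law forward to the uniform law, so $U_k(a)\overset{d}{=}-b_a+2b_a V_k$ with $V_1,V_2,\ldots$ i.i.d.\ uniform on $[0,1]$. Substituting and splitting the (almost surely absolutely convergent) series,
\[
Z_*(a)\ \overset{d}{=}\ \sum_{k=1}^{\infty}\frac{-b_a+2b_a V_k}{2^k}\ =\ -b_a\sum_{k=1}^{\infty}2^{-k}+2b_a\sum_{k=1}^{\infty}\frac{V_k}{2^k}\ =\ -b_a+2b_a V_*,
\]
where I have used $\sum_{k\ge1}2^{-k}=1$ and where the rearrangement is legitimate because $\sum_k b_a 2^{-k}<\infty$, so one may pass to the limit in the partial sums by dominated convergence. (Equivalently, combining this with the case $a=2$ gives the compact relation $Z_*(a)\overset{d}{=}(b_a/3)\,Z_*$, a pure rescaling of the base-$2$ limit law; the form above is the one matching the statement.)

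For part (b) I would first record that $V_*$ is absolutely continuous, so that $F_*$ possesses a density $f_*=F_*'$ almost everywhere, which is precisely the $f_*$ introduced earlier in the section: indeed $V_*\overset{d}{=}\tfrac12 V_1+\tfrac12 V_*'$ with $V_*'$ an independent copy of $V_*$, so the law of $V_*$ is the convolution of the absolutely continuous law of $\tfrac12 V_1$ with a probability measure and hence has a density. Then, since $v\mapsto -b_a+2b_a v$ is strictly increasing, for every $x\in\mathbb{R}$
\[
G_*^{(a)}(x)={\bf P}[Z_*(a)\le x]={\bf P}[-b_a+2b_a V_*\le x]={\bf P}\Big[V_*\le\tfrac{x+b_a}{2b_a}\Big]=F_*\Big(\tfrac{x+b_a}{2b_a}\Big),
\]
and differentiating by the chain rule yields $g_*^{(a)}(x)=\tfrac{1}{2b_a}\,f_*\big(\tfrac{x+b_a}{2b_a}\big)$. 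Taking $b_2=3$ recovers the preceding proposition for $Z_*$.

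I expect no genuine obstacle: the computation is a routine monotone change of variables once the series identity is in hand. The two points deserving explicit care are the term-by-term affine rewriting of the infinite series in part (a), which rests on absolute convergence together with the common law and independence of the $V_k$, and the observation in part (b) that $V_*$ really does possess a density, so that the second formula is a genuine pointwise identity and not merely a restatement at the level of distribution functions.
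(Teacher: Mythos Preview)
Your proposal is correct and follows essentially the same route as the paper: the paper's ``proof'' is the short paragraph preceding the proposition, which records $U(a)\overset{d}{=}-b_a+2b_aV$ and the implication $Z_*(a)\overset{d}{=}-b_a+2b_aV_*$, after which the displayed formulas are immediate. You carry out exactly this plan, only with more care---justifying the term-by-term affine substitution in the infinite series and verifying that $V_*$ genuinely has a density via the self-similarity $V_*\overset{d}{=}\tfrac12 V_1+\tfrac12 V_*'$---points the paper leaves implicit.
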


\section{Appendix}%sec6

Here we provide more details about the numbers $\alpha_k^{(N)}$, see the Introduction. Hence we present properties for 
$\alpha_k^{(N)}(a)$ in the case $a=2.$ Some of these properties can be extended to arbitrary integer $a>2.$ We do not 
pursue this here.  

For an arbitrary function $g:\mathbb{R\mapsto\mathbb{R}}$, consider
the multiple sums 
\[
T_{N}\left(x\right)=\sum_{k_{1}=0}^{1}\ldots\sum_{k_{N}=0}^{2^{N}-1}g\left(x+k_{1}+\cdots+k_{N}\right)
\]
over the parallelepipedic domain $\times_{i=1}^{N}\left[0,2^{i}-1\right]$.
Here are the first cases:  
\[
T_{1}\left(x\right)=\sum_{k_{1}=0}^{1}g\left(x+k_{1}\right)=g\left(x\right)+g\left(x+1\right),
\]
\[
T_{2}\left(x\right)=\sum_{k_{1}=0}^{1}\sum_{k_{2}=0}^{3}g\left(x+k_{1}+k_{2}\right)=g\left(x\right)+2g\left(x+1\right)+2g\left(x+2\right)+2g\left(x+3\right)+g\left(x+4\right),
\]
and 
\begin{align*}
T_{3}\left(x\right) & =\sum_{k_{1}=0}^{1}\sum_{k_{2}=0}^{3}\sum_{k_{3}=0}^{7}g\left(x+k_{1}+k_{2}+k_{3}\right)=g\left(x\right)+3g\left(x+1\right)+5g\left(x+2\right)\\
 & +7g\left(x+3\right)+8g\left(x+4\right)+8g\left(x+5\right)+8g\left(x+6\right)+8g\left(x+7\right)+7g\left(x+8\right)\\
 & +5g\left(x+9\right)+3g\left(x+10\right)+g\left(x+11\right).
\end{align*}

The sequence $\alpha_{k}^{\left(N\right)}$ of \ $2^{N+1}-N-1$ coefficients 
that appear in the sum 
\begin{equation}
T_{N}\left(x\right)=\sum_{k=0}^{2^{N+1}-N-2}\alpha_{k}^{\left(N\right)}g\left(x+k\right)\label{eq:alpha}
\end{equation}
are, for $N=1,2$ and $3,$ respectively, as follows:  
\[
1,1
\]
\[
1,2,2,2,1
\]
\[
1,3,5,7,8,8,8,8,7,5,3,1.
\]
Interestingly, these coefficients count the number of integer points
contained in hyperplanes that intersect a parallelepiped, as indicated
in the figures below in the case $N=2$ and $N=3.$ 

\begin{center}
\includegraphics[scale=0.2]{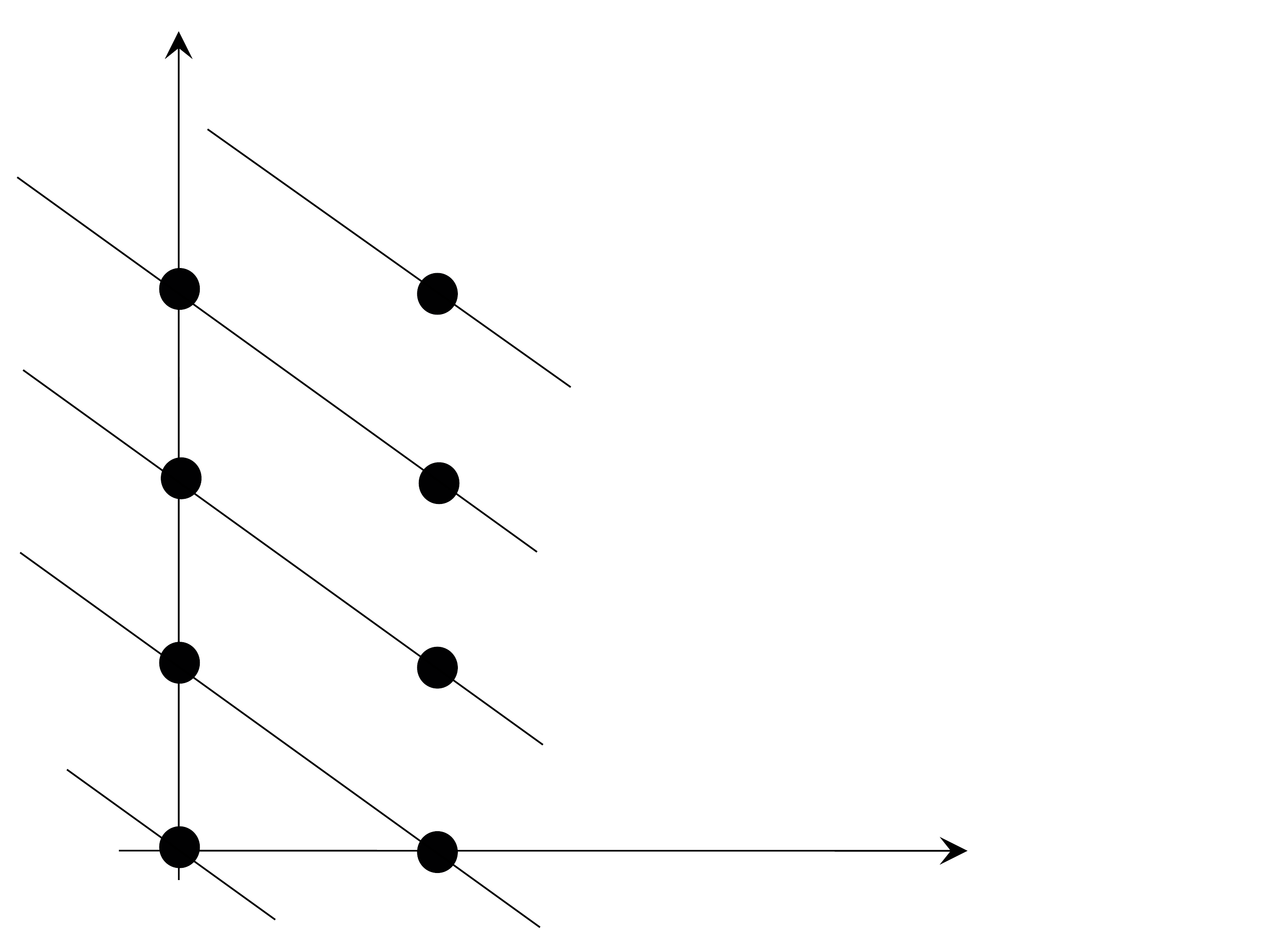} \includegraphics[scale=0.2]{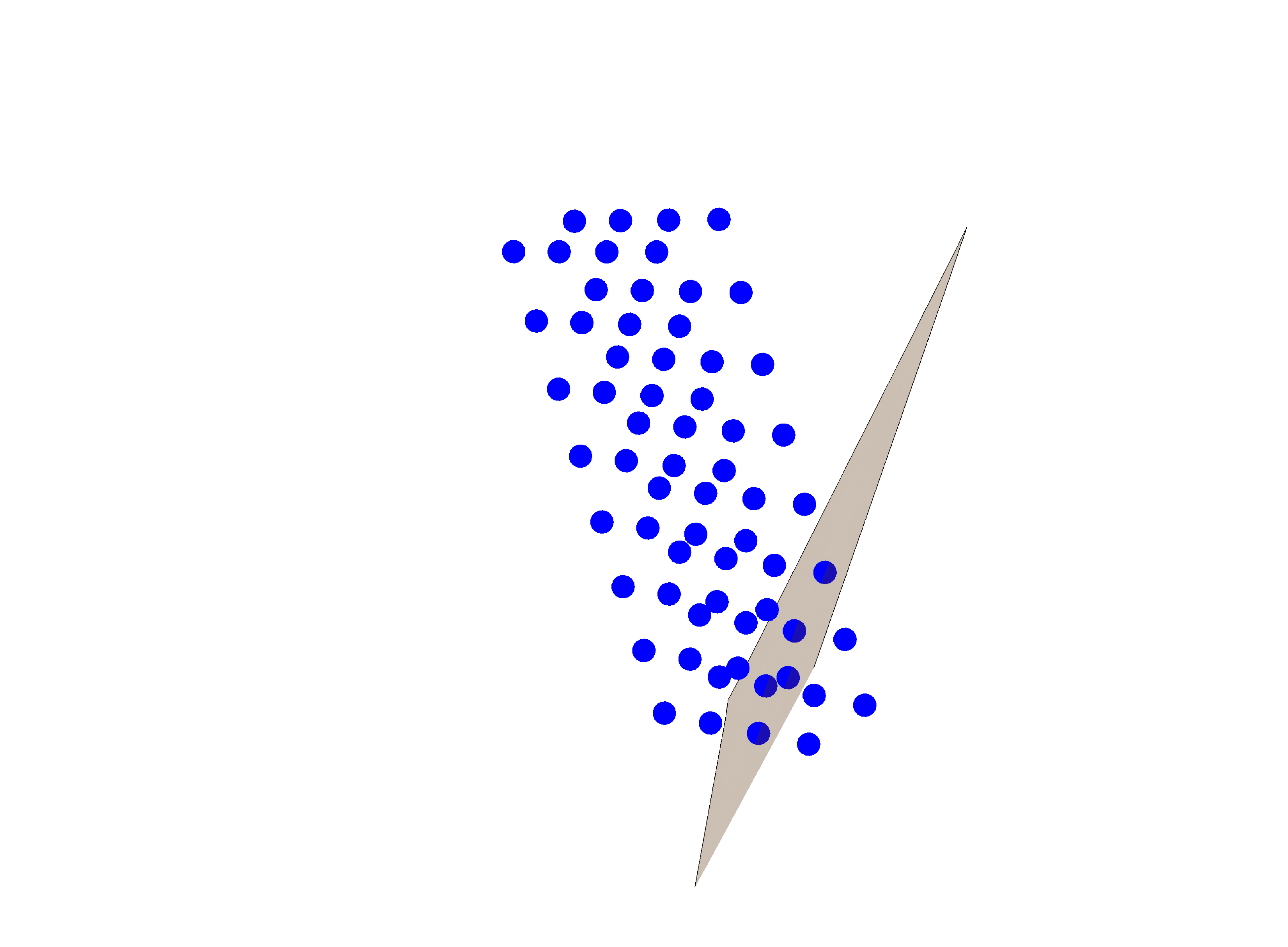} 
\par\end{center}

It is worth mentioning that the sequence $\{\alpha_{k}^{\left(N\right)}\}$
appears in the following unrelated context: Denote by $s_{2}\left(n\right)$
the number of the digits $``1"$ in the binary expansion of $n.$ Then
one can show that, for an arbitrary function $f,$ the following relation holds: 
\[
\sum_{n=0}^{2^{N}-1}\left(-1\right)^{s_{2}\left(n\right)}f\left(x+n\right)=\left(-1\right)^{N}\sum_{k=0}^{2^{N}-N-1}\alpha_{k}^{\left(N-1\right)}\Delta^{N}f\left(x+k\right).
\]
Here $\Delta$ is the finite difference operator, $\Delta f\left(x\right)=f\left(x+1\right)-f\left(x\right).$
Taking $f\left(x\right)=\hbox{e}^{tx}$ in the previous identity gives 
\[
\sum_{k=0}^{2^{N}-N-1}\alpha_{k}^{\left(N-1\right)}\hbox{e}^{kt}=\frac{1}{\left(1-\hbox{e}^{z}\right)^{N}}\sum_{n=0}^{2^{N}-1}\left(-1\right)^{s_{2}\left(n\right)}\hbox{e}^{nt}.
\]
Expanding the term $\left(1-\hbox{e}^{t}\right)^{-N}$ in an infinite series
with respect to $\hbox{e}^{t}$ and replacing $N$ by $N+1$ gives the following
expression for $\alpha_{j}^{\left(N\right)}:$ 
\begin{equation}
\alpha_{j}^{\left(N\right)}=\sum_{n=0}^{\min\{j, \ 2^{N+1}-1\}}\binom{j-n+N}{N}\left(-1\right)^{s_{2}\left(n\right)},\thinspace\thinspace\text{for\thinspace\thinspace}j = 0, 1, 2, \ldots.\label{eq:alphalN}
\end{equation}
Since there are $2^{N+1}-N-1$ coefficients $\alpha_{j}^{\left(N\right)},$
we have $0\le j\le2^{N+1}-N-2,$ hence the previous identity can
be written as 
\begin{equation}
\alpha_{j}^{\left(N\right)}=\sum_{n=0}^{j}\binom{j-n+N}{N}\left(-1\right)^{s_{2}\left(n\right)}, \ j = 0, 1, 2, \ldots, 2^{N+1}-N-2.\label{eq:alphalN2}
\end{equation}
Notice that the identity \eqref{eq:alphalN} is true for any integer $j\ge0,$
while \eqref{eq:alphalN2} is  valid only in the natural range $0\le j\le2^{N+1}-N-2.$
The expression for $\alpha_{j}^{\left(N\right)}$ in \eqref{eq:alphalN2}
can be seen as a convolution over the variable $n,$ from $n=0$ to
$n=j,$ between the sequences $\left\{ \left(-1\right)^{s_{2}\left(n\right)}\right\} $
and $\left\{ \binom{n+N}{N}\right\} .$ Moreover, it can be inverted:
starting from 
\[
\left(1-\hbox{e}^{z}\right)^{N}\sum_{k=0}^{2^{N}-N-1}\alpha_{k}^{\left(N-1\right)}\hbox{e}^{kz}=\sum_{n=0}^{2^{N}-1}\left(-1\right)^{s_{2}\left(n\right)}\hbox{e}^{nz}
\]
and expanding the left-hand side gives 
\[
\left(-1\right)^{s_{2}\left(n\right)}=\sum_{k=0}^{2^{N}-1}\binom{N}{k}\left(-1\right)^{k}\alpha_{n-k}^{\left(N-1\right)},\thinspace\thinspace0\le n\le2^{N}-1.
\]
Notice that the left-hand side depends on $N$ via the condition $0\le n\le2^{N}-1.$

\section*{Acknowledgments}%sec7

The material in this paper is partly based upon work by the second named author who was supported by the NSF (Grant No. DMS-1439786) to  
visit the Institute for Computational and Experimental Research in Mathematics in Providence (RI) during the `Point Configurations in Geometry, Physics and Computer Science' Semester Program, Spring 2018.
 
The first named author acknowledges the support provided by Academia Sinica, Institute of Statistical Science (Taiwan, RoC), for a research visit, November 2018, 
when the present paper was at the last stage of completion and when, for the first time, the results were presented publicly at a seminar talk.   

Our thanks are addressed to H.K. Hwang (Taipei),  A. Gnedin (London), L. Devroye (Montreal), P. Salminen and G. H\"{o}gn\"{a}s (Turku) for sharing with us constructive and useful comments on the topic.

\end{document}